\def\draft{n}
\newtheorem{theorem}{Theorem}[section]
\theoremstyle{definition}
\newtheorem{lemma}[theorem]{Lemma}
\newtheorem{definition}[theorem]{Definition}
\newtheorem{remark}[theorem]{Remark}
\newtheorem{corollary}[theorem]{Corollary}
\newtheorem{question}[theorem]{Question}
\def\printname#1{
        \if\draft y
                \smash{\makebox[0pt]{\hspace{-0.5in}
                        \raisebox{8pt}{\tt\tiny #1}}}
        \fi
}
\newcommand{\psdraw}[2]
         {\begin{array}{c} \hspace{-1.3mm}
        \raisebox{-4pt}{\epsfig{figure=draws/#1.eps,width=#2}}
        \hspace{-1.9mm}\end{array}}
\newlength{\standardunitlength}
\long\def\@makecaption#1#2{%
     \vskip 10pt

\setbox\@tempboxa\hbox{
       \small\sf{\bfcaptionfont #1. }\ignorespaces #2}%
     \ifdim \wd\@tempboxa >\captionwidth {%
         \rightskip=\@captionmargin\leftskip=\@captionmargin
         \unhbox\@tempboxa\par}%
       \else
         \hbox to\hsize{\hfil\box\@tempboxa\hfil}%
     \fi}
\font\bfcaptionfont=cmssbx10 scaled \magstephalf
\newdimen\@captionmargin\@captionmargin=2\parindent
\newdimen\captionwidth\captionwidth=\hsize
\def\lbl#1{\label{#1}\printname{#1}}
\def\BN{\mathbb N}
\def\BZ{\mathbb Z}
\def\la{\langle}
\def\ra{\rangle}
\def\longto{\longrightarrow}
\def\ve{\varepsilon}
\def\ccdot{\!\cdot\!\!}
\begin{document}


\title[Alternating knots, planar graphs and $q$-series]{
Alternating knots, planar graphs and $q$-series}
\author{Stavros Garoufalidis}
\address{School of Mathematics \\
         Georgia Institute of Technology \\
         Atlanta, GA 30332-0160, USA \newline 
         {\tt \url{http://www.math.gatech.edu/~stavros }}}
\email{stavros@math.gatech.edu}
\author{Thao Vuong}
\address{School of Mathematics \\
         Georgia Institute of Technology \\
         Atlanta, GA 30332-0160, USA \newline 
         {\tt \url{http://www.math.gatech.edu/~tvuong }}}
\email{tvuong@math.gatech.edu}
\thanks{S.G. was supported in part by a National Science Foundation
grant DMS-0805078. \\
\newline
1991 {\em Mathematics Classification.} Primary 57N10. Secondary 57M25.
\newline
{\em Key words and phrases: Knots, colored Jones polynomial, stability, index,
$q$-series, $q$-hypergeometric series, Nahm sums, planar graphs, Tait graphs.}
}

\date{December 13, 2013}


\begin{abstract}
Recent advances in Quantum Topology assign $q$-series to knots in
at least three different ways. The $q$-series are given by generalized
Nahm sums (i.e., special $q$-hypergeometric sums) and have unknown
modular and asymptotic properties. We give an efficient method to compute 
those $q$-series that come from planar graphs (i.e., reduced Tait graphs
of alternating links) and compute several terms of those series for all 
graphs with at most 8 edges drawing several conclusions.  
In addition, we give a graph-theory proof of a theorem of Dasbach-Lin
which identifies the coefficient of $q^k$
in those series for $k=0,1,2$ in terms of polynomials on the number of 
vertices, edges and triangles of the graph.
\end{abstract}

\maketitle

\tableofcontents

\section{Introduction}
\lbl{sec.intro}

\subsection{$q$-series in Quantum Knot Theory}
\lbl{sub.QT}

Recent developments in Quantum Topology associate $q$-series 
to a knot $K$ in at least three different ways:
\begin{itemize}
\item
via stability of the coefficients of the colored Jones polynomial of $K$,
\item
via the 3D index of $K$,
\item
via the conversion of state-integrals of the quantum dilogarithm to
$q$-series. 
\end{itemize}
The first method is developed of alternating knots in detail, see 
\cite{Ar1,Ar2,AD} and also \cite{GL1}.
The second method uses the 3D index of an ideal triangulation introduced in 
\cite{DGG1,DGG2}, with necessary and sufficient conditions for its
convergence established in \cite{Ga:index} and its topological invariance
(i.e., independence of the ideal triangulation) for hyperbolic 3-manifolds 
with torus boundary proven in \cite{GHRS}. 
The third method was developed in \cite{GK}.

In all three methods, the $q$-series are multi-dimensional $q$-hypergeometric
series of generalized Nahm type; see \cite[Sec.1.1]{GL1}. 
Their modular and the asymptotic properties remains unknown.
Some empirical results and relations among these $q$-series 
are given in \cite{GZ1,GZ2}.

The paper focuses on the $q$-series obtained by the first method.
For some alternating knots, the $q$-series obtained by the first method can
be identified with a finite product of unary theta or false theta series; see
\cite{AD,Andrews}. This was observed independently by the first author
and Zagier in 2011 for all alternating knots in the Rolfsen table \cite{Rf}
up to the knot $8_4$. Ideally, one might expect this to be the case for all
alternating knots. For the knot $8_5$ however, the first 100 terms of its
$q$-series failed to identify it with a reasonable finite
product of unary theta or false
theta series. This computation was performed by the first author at the 
request of Zagier and the result was announced in \cite[Sec.6.4]{Ga:arbeit}.

The purpose of the paper is to give the details of the above computation
and to extend it systematically to all alternating knots and links with
at most 8 crossings. Our computational approach is similar to the 
computation of the index of a knot given in \cite[Sec.7]{GHRS}.

\subsection{Rooted plane graphs and their $q$-series}
\lbl{sub.qplanar} 

By {\em planar graph} we mean an abstract graph, possibly with loops and 
multiple edges, which can be embedded on the plane. A {\em plane graph} 
(also known as a planar map) is an embedding of a planar graph to the plane. 
A {\em rooted plane map} is a plane map together with the choice of a vertex
of the unbounded region. 

In \cite{GL1} Le and the first author introduced a function 
$$
\Phi: \{ \text{Rooted plane graphs}\} \longto \BZ[[q]],
\qquad G \mapsto \Phi_G(q) 
$$ 
For the precise relation between $\Phi_G(q)$ and
the colored Jones function of the corresponding alternating link $L_G$,
see Section \ref{sec.alt}. To define $\Phi_G(q)$, we need to introduce some 
notation. An \emph{admissible state} 
$(a,b)$ of $G$ is an integer assignment $a_p$ for each face $p$ of $G$ and 
$b_v$ for each vertex $v$ of $G$ such that $a_p+b_v\geq 0$ for all pairs $(v,p)$
such that $v$ is a vertex of $p$. For the unbounded face 
$p_\infty$ we set $a_\infty=0$ and thus $b_v=a_\infty+b_v\geq 0$ for all 
$v\in p_\infty$. We also set $b_v=0$ for a fixed vertex $v$ of $p_\infty$. 
In the formulas below, $v,w$ will denote vertices of $G$, 
$p$ a face of $G$ and $p_\infty$ is the unbounded face. 
We also write $v\in p$, $vw\in p$ if $v$ is a vertex and $vw$ is an 
edge of $p$.

For a polygon $p$ with $l(p)$ edges and vertices $b_1,\dots,b_{l(p)}$
in counterclockwise order
$$
\psdraw{pp}{1in}
$$ 
we define 
\begin{align*}
\gamma(p) &=l(p)a_p^2+2a_p(b_1+b_2+ \dots +b_{l(p)}) \,.
\end{align*}
Let 
\begin{equation}
\lbl{def.A}
A(a,b)= \sum\limits_{p}\gamma(p)+2\sum\limits_{e=(v_iv_j)}b_{v_i}b_{v_j}
\end{equation}
where the $p$-summation (here and throughout the paper) 
is over the set of {\em bounded} 
faces of $G$ and the $e$-summation is over the set of edges 
$e=(v_iv_j)$ of $p$, and
\begin{equation}
\lbl{def.B}
B(a,b)=2\sum\limits_{v}b_v+\sum\limits_{p}(l(p)-2) a_p
\end{equation}
where the $v$-summation is over the set of vertices of $G$ and 
the $p$-summation is over the set of bounded faces of $G$.

\begin{definition}
\lbl{def.phi}\cite{GL1}
With the above notation, we define
\begin{equation}
\lbl{eq.defphi}
\Phi_G(q)=(q)_\infty^{c_2}\sum\limits_{(a,b)}(-1)^{B(a,b)} 
\frac{q^{\frac{1}{2}A(a,b)+\frac{1}{2}B(a,b)}}{\prod\limits_{(p,v): v\in p}(q)_{a_p+b_v}}
\end{equation}
where the sum is over the set
of all admissible states $(a,b)$ of $G$, and in the product 
$(p,v): v\in p$ means a pair of face $p$ and vertex $v$ such that 
$p$ contains $v$. Here, $c_2$ is the number of edges of $G$ and
$$
(q)_\infty=\prod_{n=1}^\infty (1-q)^n
=1 - q - q^2 + q^5 + q^7 - q^{12} - q^{15} \dots
$$
\end{definition}

Convergence of the $q$-series of Equation \eqref{eq.defphi} in the 
formal power series ring $\BZ[[q]]$ is not
obvious, but was shown in \cite{GL1}. Below, we give effective (and
actually optimal) bounds for convergence of $\Phi_G(q)$. To phrase them,
let $b_p=\min\{b_v:v \in p\}$ where $p$ denotes a face of $G$.

\begin{theorem}
\lbl{thm.2}
\rm{(a)} We have 
{\small
\begin{align}
\lbl{eq.Q2}
A(a,b)& =  \sum\limits_{p} \left(
l(p) (a_p+b_p)^2+2 (a_p+b_p)
\sum\limits_{v \in p}(b_v-b_p)
\right.\\ \notag & \left.
+ \sum\limits_{vv'\in p}(b_v-b_p)(b_{v'}-b_p) \right)
+  \sum\limits_{vv'\in p_\infty}b_vb_{v'}  \,.
\end{align}
}
Each term in the above sum is manifestly nonnegative.
\newline
\rm{(b)} $B(a,b)$ can also be written as a finite sum of manifestly
nonnegative linear forms on $(a,b)$.
\newline
\rm{(c)} 
If $\frac{1}{2}(A(a,b)+B(a,b))\leq N$ for some natural number $N$, then for 
every $i$ and every $j$ there exist $c_i, c'_i$ and $c_j,c'_j$ 
(computed effectively from $G$) such that
$$
c_i N  \leq b_i  \leq c_i' N, \qquad\qquad
c_j' \sqrt{N}  \leq a_j  \leq c_j N  + c_j' \sqrt{N} \,.
$$
\end{theorem}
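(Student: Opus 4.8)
The plan is to handle the three parts in order, with (a) and (b) feeding the estimates in (c).

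For (a) I would prove the identity face by face. Fix a bounded face $p$ and expand the candidate summand
$$
Q(p) = l(p)(a_p+b_p)^2 + 2(a_p+b_p)\sum_{v\in p}(b_v-b_p) + \sum_{vv'\in p}(b_v-b_p)(b_{v'}-b_p).
$$
Using that a polygon with $l(p)$ edges has $l(p)$ vertices, and that each vertex lies on exactly two of its edges (so $\sum_{vv'\in p}(b_v+b_{v'})=2\sum_{v\in p}b_v$), all the $b_p$-dependent terms cancel and one is left with $Q(p)=\gamma(p)+\sum_{vv'\in p}b_vb_{v'}$, exactly the per-face contribution of $\gamma(p)$ to $A$ in \eqref{def.A} together with its within-face edge term. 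Summing over all faces including $p_\infty$ (where $a_\infty=0$ kills $\gamma(p_\infty)$ and $b_{p_\infty}=0$), and using that every edge is traversed by two face-sides, so $\sum_{\text{faces}}\sum_{vv'\in p}b_vb_{v'}=2\sum_{\text{edges}}b_{v_i}b_{v_j}$, reassembles $A(a,b)$ and yields \eqref{eq.Q2}. Nonnegativity of each term is then immediate: admissibility gives $a_p+b_p=\min_{v\in p}(a_p+b_v)\ge 0$, minimality of $b_p$ gives $b_v-b_p\ge 0$, and $b_v\ge 0$ handles the $p_\infty$ term.

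For (b) the only obstruction to manifest nonnegativity of $B$ in \eqref{def.B} is $\sum_p(l(p)-2)a_p$, since the $a_p$ need not be nonnegative; the idea is to absorb each $a_p$ into the admissible forms $a_p+b_v\ge 0$. Concretely I would choose, for every bounded face $p$, a sub-multiset of $l(p)-2$ of its incident vertices so that each vertex $v$ is chosen at most twice in total; writing $(l(p)-2)a_p=\sum_{\text{chosen }v}(a_p+b_v)-\sum_{\text{chosen }v}b_v$ then gives $B=\sum_{(p,v)\text{ chosen}}(a_p+b_v)+\sum_v(2-n_v)b_v$ with $n_v\le 2$, a sum of nonnegative forms. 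Existence of such a selection is a bipartite feasibility problem (faces are sources of demand $l(p)-2$, vertices are sinks of capacity $2$), and I expect this to be the main obstacle of (b). I would verify the cut condition $\sum_{p\in\mathcal Q}(l(p)-2)\le 2\,|N(\mathcal Q)|$ for every set $\mathcal Q$ of bounded faces (by Hall's theorem, equivalently max-flow/min-cut) by passing to the plane subgraph spanned by the edges bordering $\mathcal Q$ and applying Euler's formula there; this yields the global identity $\sum_{\text{bdd }p}(l(p)-2)=2(V-1)-l(p_\infty)$ and its restrictions, and the resulting slack $2+l(p_\infty)$ is what makes the condition hold.

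For (c), parts (a) and (b) give $A,B\ge 0$, so $\tfrac12(A+B)\le N$ forces $A\le 2N$ and $B\le 2N$, whence every individual manifestly nonnegative summand is $\le 2N$. From $A$ the quadratic term gives $l(p)(a_p+b_p)^2\le 2N$, i.e. $|a_p+b_p|\le\sqrt{2N/l(p)}$, which is the source of the $\sqrt N$ scale for the $a$'s. From $B$ one reads off that every $b_v$ is $O(N)$: the standalone forms $b_v$ attached to under-used vertices are directly $\le 2N$, while the fully-used vertices are controlled by propagating these bounds through the graph. Equivalently, on the admissible cone $B$ has bounded sublevel sets once $b_{v_0}=0$ is imposed, since any recession ray with $B=0$ makes all its nonnegative constituent forms vanish, and connectivity then forces $b_v=0$ for all $v$; this coercivity is the technical heart of (c). Finally, writing $a_p=(a_p+b_p)-b_p$ and combining the $O(\sqrt N)$ bound on $a_p+b_p$ with the $O(N)$ bound on $b_p$ produces the two-sided estimates on the $a_j$ with a linear term and a $\sqrt N$ correction, together with the $O(N)$ bounds on the $b_i$; all constants are explicit in $l(p)$, $V$, and $l(p_\infty)$, hence effective.
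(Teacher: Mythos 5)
Your part (a) is correct and is essentially the paper's own argument run in reverse (the paper completes the square in \eqref{def.A}; you expand \eqref{eq.Q2} and cancel), and your overall strategy for (c) — read off $l(p)(a_p+b_p)^2\le 2N$ from (a) and linear bounds on the $b_v$ from (b) — is also the paper's. The genuine gap is in (b), and it is not cosmetic. You assert that ``the only obstruction to manifest nonnegativity of $B$ is $\sum_p(l(p)-2)a_p$,'' i.e.\ you treat $2\sum_v b_v$ as manifestly nonnegative. That is false: admissibility forces $b_v\ge 0$ only for vertices of the unbounded face (where $a_\infty=0$); at interior vertices $b_v$ can be negative, and such states genuinely occur and contribute (e.g.\ the states with $b_w=-1$ at interior vertices in Lemma \ref{lem.triangle2} and Case 2.2 of Section \ref{coef.q2}). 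Consequently, in your decomposition $B=\sum_{(p,v)\ \mathrm{chosen}}(a_p+b_v)+\sum_v(2-n_v)b_v$ the residual term $(2-n_v)b_v$ is manifestly nonnegative only when $v$ is a boundary vertex or when $n_v=2$. So the selection must use every \emph{interior} vertex exactly twice, with only boundary vertices allowed to be under-used — precisely the degree condition built into the paper's Lemma \ref{lem.dist}. Your Hall/max-flow argument enforces only the upper bounds $n_v\le 2$: a flow meeting all demands may well under-use interior vertices while spending capacity on boundary ones, so it does not deliver the decomposition \eqref{eq.dist} that is actually needed. Fixing this means solving a transportation problem with \emph{lower} bounds ($=2$) at interior vertices, whose feasibility (Hoffman-type circulation conditions) requires cut inequalities beyond the single condition $\sum_{p\in\mathcal Q}(l(p)-2)\le 2\,|N(\mathcal Q)|$ you propose to verify; the paper sidesteps flow arguments entirely with an explicit inductive construction (triangulate, induct on interior vertices, then collapse).

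The gap propagates into (c). The paper's linear bounds on the $b_v$ rest on Corollary \ref{cor.2}, namely $B(a,b)\ge a_p+b_v$ for \emph{every} incident pair $(p,v)$, and this in turn uses the freedom in Lemma \ref{lem.dist} to prescribe any given pair $v_0v_{p_0}$ as a chosen edge of $\Gamma$ — a different selection for each pair. A single fixed selection, which is all your construction provides, only bounds the chosen forms, which is why your argument has to fall back on ``propagation'' and on the coercivity claim that a recession ray with $B=0$ forces $b_v=0$ by connectivity. That claim is asserted, not proved: on a recession ray the unchosen pairs give only inequalities $a_p+b_v\ge 0$, not equalities, so vanishing does not propagate for free; making it work requires exactly the saturation structure ($n_v=2$ at interior vertices) and pair-by-pair bounds discussed above. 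Your homogeneity observation (linear-in-$N$ bounds follow from boundedness of the $B\le 1$ slice) is a nice and valid reduction, but as written both (b) and (c) rest on the false premise that $b_v\ge 0$ at all vertices.
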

For a detailed illustration of the above Theorem, see Section \ref{sub.L8a7}.

\subsection{Properties of the $q$-series of a planar graph}
\lbl{sub.properties}

The next lemma summarizes some properties of the series $\Phi_G(q)$.
Part (a) of the next lemma is taken from 
\cite[Thm.1.7]{GL1} \cite[Lem.13.2]{GL1}. Parts (b) and (c) were observed
in \cite{AD} and \cite{GL1} and follow easily from the behavior of the
colored Jones polynomial under disjoint union and under a connected sum.
Note that we use the normalization that the colored Jones polynomial of the
unknot is $1$. Part (d) was proven in  \cite{AD} and \cite[Lem.13.3]{GL1}.

\begin{lemma} 
\cite{AD,GL1}
\lbl{lem.reduced}
\rm{(a)} The series $\Phi_G(q)$ depends only on the abstract planar graph $G$
and not on the rooted plane map.
\newline
\rm{(b)} If $G=G_1 \sqcup G_2$ is disconnected, then 
$$
(1-q) \Phi_{G}(q)= \Phi_{G_1}(q) \Phi_{G_2}(q) \,.
$$
\newline
\rm{(c)} If $G$ has a separating edge (also known as a bridge) $e$
and $G\setminus\{e\}=G_1 \sqcup G_2$, then
$$
\Phi_G(q)= \Phi_{G_1}(q) \Phi_{G_2}(q) \,.
$$
\newline
\rm{(d)}
If $G$ is a planar graph (possibly with multiple edges and loops) and
$G'$ denotes the corresponding simple graph obtained by removing all loops
and replacing all edges of multiplicity more than with edges of multiplicity
one, then 
$$
\Phi_G(q)=\Phi_{G'}(q) \,.
$$
\end{lemma}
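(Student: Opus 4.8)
The plan is to reduce each statement to one of two kinds of input: the topological behaviour of the colored Jones polynomial of the alternating link $L_G$ under split union and connected sum, recalled in Section~\ref{sec.alt}, and classical $q$-series identities applied directly to the Nahm sum \eqref{eq.defphi}. Throughout I take part~(a) as given, citing \cite{GL1}; it is exactly what lets me choose the root freely (hence the vertex $v_0$ with $b_{v_0}=0$, and the plane embedding) when I split $G$ into pieces sharing the unbounded face $p_\infty$.

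For parts~(b) and~(c) I would first fix the dictionary ``operation on $G$ $\leftrightarrow$ operation on $L_G$''. A disjoint union $G=G_1\sqcup G_2$ corresponds to the split (distant) union $L_{G_1}\sqcup L_{G_2}$, while a bridge $e$ with $G\setminus\{e\}=G_1\sqcup G_2$ corresponds to the connected sum $L_{G_1}\# L_{G_2}$. In the normalization where the unknot has colored Jones $1$, connected sum is multiplicative, whereas split union carries an extra quantum-integer factor, $J_{L_{G_1}\sqcup L_{G_2}}(n)=[n]\,J_{L_{G_1}}(n)\,J_{L_{G_2}}(n)$ with $[n]=(q^{n/2}-q^{-n/2})/(q^{1/2}-q^{-1/2})$. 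Passing to the stable limit that defines $\Phi$, the connected-sum relation yields~(c) on the nose, $\Phi_G=\Phi_{G_1}\Phi_{G_2}$. For~(b) the only extra ingredient is the stabilization of the quantum integer: since $q^{(n-1)/2}[n]=1+q+\cdots+q^{n-1}$ stabilizes to $1/(1-q)$, the split-union relation becomes $\Phi_G=\tfrac{1}{1-q}\Phi_{G_1}\Phi_{G_2}$, i.e. $(1-q)\Phi_G=\Phi_{G_1}\Phi_{G_2}$.

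For part~(d) loops and multiple edges behave differently and I would treat them separately. A loop of $G$ is a nugatory (reducible) crossing of $L_G$, removed by a Reidemeister~I move; since this does not change the link it does not change its colored Jones polynomial, hence $\Phi_G=\Phi_{G'}$. For multiple edges I would instead argue directly on \eqref{eq.defphi}, which is the more graph-theoretic route and the one I find cleanest. A double edge between vertices $u,v$ bounds a bigon face $p_0$ with $l(p_0)=2$; collapsing it to a single edge deletes $p_0$ and one edge, and so removes the variable $a_{p_0}$, one factor $(q)_\infty$ from $(q)_\infty^{c_2}$, the two Pochhammer factors $(q)_{a_{p_0}+b_u}(q)_{a_{p_0}+b_v}$, the bigon's contribution $a_{p_0}^2+a_{p_0}(b_u+b_v)$ to $\tfrac12 A$ (its contribution to $B$ vanishes, as $l(p_0)-2=0$), and the extra edge's contribution $2b_ub_v$ to $A$ (that is, $b_ub_v$ to $\tfrac12 A$). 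Carrying out the sum over $a_{p_0}$ therefore amounts to the single identity
\[
(q)_\infty \sum_{n} \frac{q^{\,n^2+n(b_u+b_v)}}{(q)_{n+b_u}\,(q)_{n+b_v}} = q^{-b_ub_v},
\]
where the range of $n$ is cut out automatically by the convention $1/(q)_m=0$ for $m<0$. After the substitution $j=n+\min(b_u,b_v)$, $k=|b_u-b_v|$, this is exactly the Durfee-rectangle identity $\sum_{j\ge 0}\frac{q^{j^2+jk}}{(q)_j(q)_{j+k}}=\frac{1}{(q)_\infty}$. Thus a double edge collapses to a single edge with no residual factor, and iterating removes all higher multiplicities.

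The main obstacle is a pair of bookkeeping points. First, one must make the stable limit precise enough that split union and connected sum of links translate to the stated operations on the $q$-series, and in particular pin the factor in~(b) to exactly $1/(1-q)$; this is where the framing and the unknot-normalization of the colored Jones have to be matched against the normalization of $\Phi$. Second, if one insists on a purely combinatorial proof of~(b) and~(c) from \eqref{eq.defphi}, the delicate point is the single-root normalization: $\Phi_G$ fixes only one vertex $v_0$ with $b_{v_0}=0$, whereas $\Phi_{G_1}$ and $\Phi_{G_2}$ each fix one, so splitting $G$ leaves exactly one extra free $b$-variable. Part~(a) is what allows me to place that freedom where I want, and the asymmetry between~(b) and~(c) then comes entirely from the coupling term $2b_{u_1}b_{u_2}$ produced by the bridge in \eqref{def.A} — present in~(c), absent in~(b). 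Resolving the resulting residual one-variable sum (again of geometric/Durfee type) is the step most likely to require care.
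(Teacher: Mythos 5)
Your proposal is correct; note, though, what it is being measured against. The paper gives no proof of Lemma \ref{lem.reduced}: part (a) is quoted from \cite{GL1}, part (d) from \cite{AD} and \cite{GL1}, and parts (b), (c) are dispatched with the single remark that they follow from the behavior of the colored Jones polynomial under disjoint union and connected sum. Your proof of (b) and (c) is exactly that indicated route, carried out correctly: a bridge of $G$ is a nugatory crossing of $L_G$, so $L_G$ is isotopic to $L_{G_1}\# L_{G_2}$ and multiplicativity of the unknot-normalized invariant under connected sum gives (c), while disjoint union corresponds to split union, whose extra quantum integer $[n]$ has shifted form $1+q+\cdots+q^{n-1}\to 1/(1-q)$, pinning the factor in (b). For (d) you go beyond the paper and give a self-contained argument: loops are removed by untwisting the corresponding nugatory crossing (if the loop encloses part of the diagram this is a rotation of a tangle rather than a single Reidemeister I move, and one then needs part (a), since the enclosed part is re-embedded), and parallel edges are removed by summing the bigon variable out of \eqref{eq.defphi}. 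I checked your bookkeeping --- the bigon contributes $a_{p_0}^2+a_{p_0}(b_u+b_v)$ to $\frac{1}{2}A$ and nothing to $B$ since $l(p_0)-2=0$, and the superfluous edge contributes $b_ub_v$ to $\frac{1}{2}A$ --- and your identity is the classical Durfee-rectangle ($q$-Gauss) identity, so the cancellation is exact. Two parallel edges need not bound a bigon in the given embedding, so you should say explicitly that one first re-embeds $G$ using part (a).

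One genuine correction to your final paragraph: the purely combinatorial proof of (c) that you sketch as a fallback is not merely delicate, it is unavailable, because for a graph with a bridge the rooted Nahm sum of Definition \ref{def.phi} genuinely depends on the root. For the two-edge path $u_1u_2u_3$: rooting at $u_2$ gives $(q)_\infty^2\bigl(\sum_{b\ge 0}q^b/(q)_b\bigr)^2=1$, whereas rooting at $u_1$ gives $(q)_\infty^2\sum_{b_2,b_3\ge 0}q^{b_2b_3+b_2+b_3}/\bigl((q)_{b_2}(q)_{b_3}\bigr)=(q)_\infty/(1-q)=1-q^2-\cdots\neq 1$. So parts (a) and (c), read literally against Definition \ref{def.phi}, require the bridgeless ($2$-connected) hypothesis under which \cite{GL1} works, and for graphs with bridges $\Phi_G$ has to be interpreted through the link --- which is exactly what your primary argument does. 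In other words, your link-theoretic route is not one of two options but the only correct one, and the asymmetry between (b) and (c) is not just the coupling term $2b_{u_1}b_{u_2}$. (For (b) the combinatorial route does work when $G_1$, $G_2$ are $2$-connected: the sum factors as $\Phi_{G_1}\Phi^{\text{TQFT}}_{G_2}$, and Remark \ref{rem.reduced}(b) supplies the $(1-q)$.)
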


So, we can focus our attention to simple, connected planar graphs. 
In the remaining of the paper, unless otherwise stated, $G$ will denote a
{\em simple} planar graph.
Let $\langle{f(g)}\rangle_k$ denote the coefficient of $q^k$ of $f(q) 
\in \BZ[[q]]$. The next theorem was proven in \cite{DL} using properties
of the Kauffman bracket skein module. We give an independent proof using
combinatorics of planar graphs in Section \ref{sec.thm1}. Our proof
allows us to compute the coefficient of $q^3$ in $\Phi_G(q)$, observing
a new phenomenon related to induced embeddings, and guess the coefficients
of $q^4$ and $q^5$ in $\Phi_G(q)$. This is discussed in a subsequent
publication \cite{GNZ}.

\begin{theorem}
\lbl{thm.1}\cite{DL}
If $G$ is a planar graph, we have
\begin{subequations}
\begin{align}
\lbl{eq.coeff0}
\langle{\Phi_G(q)}\rangle_0 &= 1 \\
\lbl{eq.coeff1}
\langle{\Phi_G(q)}\rangle_1 &= c_1 -c_2 -1 \\
\lbl{eq.coeff2}
\langle{\Phi_G(q)}\rangle_2&= \frac{1}{2}\left( (c_1 -c_2)^2 -2 c_3 - c_1 +
c_2 \right) 
\end{align}
\end{subequations}
where $c_1$, $c_2$ and $c_3$ denotes the number of vertices, edges and 
3-cycles of $G$.
\end{theorem}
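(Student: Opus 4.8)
The plan is to expand the generalized Nahm sum of Definition \ref{def.phi} directly to order $q^2$, using Theorem \ref{thm.2}(c) to guarantee that only finitely many admissible states contribute. Write $\Phi_G(q) = (q)_\infty^{c_2}\, S(q)$ where $S(q)$ is the bracketed state sum, and set $s_k = \langle S(q)\rangle_k$. Since $(q)_\infty = 1 - q - q^2 + O(q^5)$, a short computation gives
\[
(q)_\infty^{c_2} = 1 - c_2 q + \tfrac{1}{2}(c_2^2 - 3 c_2)\, q^2 + O(q^3),
\]
so it suffices to determine $s_0, s_1, s_2$ and read off $\langle\Phi_G(q)\rangle_k$ by multiplying the two expansions. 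By Theorem \ref{thm.2}(a)--(b) the exponent $\tfrac12(A(a,b)+B(a,b))$ is a sum of manifestly nonnegative terms, and by part (c) any state of exponent $\le 2$ has all $|a_p|,|b_v|$ bounded by explicit constants; I can therefore organize the sum by the value of the exponent and enumerate the finitely many states of exponent $0,1,2$, expanding each denominator $\prod (q)_{a_p+b_v}$ as a power series.

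First I would handle the states of exponent $\le 1$. The unique exponent-$0$ state is the zero state, giving $s_0 = 1$. Because the gauge is fixed by $a_\infty = 0$ and $b_{v^*}=0$ at a chosen boundary vertex $v^*$, the exponent-$1$ states are exactly the single-vertex states $b_v = 1$ (all other coordinates zero) for $v\neq v^*$; each has $A=0$, $B=2$, sign $+1$, and denominator $(q)_1^{d_v}$, where $d_v$ is the number of faces incident to $v$. Such a state contributes $q/(1-q)^{d_v} = q + d_v q^2 + O(q^3)$, so there are $c_1-1$ of them and $s_1 = c_1 - 1$; combined with the prefactor this already yields \eqref{eq.coeff1}.

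Next, and most delicately, I would enumerate the exponent-$2$ states. I expect three families: (i) a single vertex with $b_v=2$ ($c_1-1$ states, sign $+1$); (ii) two non-adjacent vertices with $b_v=b_w=1$ (sign $+1$, and $A=0$ since there is no shared edge); and (iii) one state for each $3$-cycle of $G$, carrying sign $(-1)^{B}=-1$ and leading coefficient $-1$. Family (iii) is the crux. For a $3$-cycle bounding a face the state is just $a_p=1$ on that triangular face, but for a non-facial $3$-cycle $C$ the contributing state assigns $a_p=1$ to the faces enclosed by $C$ together with compensating values $b_v<0$ on the enclosed vertices (for instance $a_p=1$, $b_u=-1$ on the inner triangles of $K_4$). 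One must verify that every such state again has exponent exactly $2$ with sign $-1$, and that these states are in bijection with the $3$-cycles counted by $c_3$. This is where the embedding-independence of $\Phi_G$ (Lemma \ref{lem.reduced}(a)) is indispensable: the individual states are embedding-dependent, while the total $c_3$ is not. I anticipate this is the main obstacle, both in proving completeness of the enumeration (ruling out all other configurations, in particular ones with negative $b_v$) and in setting up the bijection for non-facial triangles, which is presumably the induced-embedding phenomenon alluded to in the introduction.

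Finally I would assemble $s_2$ and convert all local quantities into $c_1,c_2,c_3$ via handshake identities. The $q^2$-tails of the exponent-$1$ states contribute $\sum_{v\neq v^*} d_v = 2c_2 - d_{v^*}$, using $\sum_v d_v = \sum_p l(p) = 2c_2$; family (i) contributes $c_1-1$; family (ii) contributes $\binom{c_1-1}{2}-(c_2-\deg(v^*))$; and family (iii) contributes $-c_3$. For a $2$-connected plane graph the number of faces at $v^*$ equals its degree, so $-d_{v^*}$ and $+\deg(v^*)$ cancel and, using $c_1-1+\binom{c_1-1}{2}=\binom{c_1}{2}$,
\[
s_2 = \binom{c_1}{2} + c_2 - c_3 .
\]
Multiplying $1 - c_2 q + \tfrac12(c_2^2-3c_2)q^2$ by $1 + (c_1-1)q + s_2 q^2$ then reproduces \eqref{eq.coeff0}--\eqref{eq.coeff2}. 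Throughout I would first reduce to simple, connected, $2$-connected graphs using Lemma \ref{lem.reduced}(b)--(d) to remove bridges, loops, multiple edges and disconnections, so that the face-counting identities above hold without correction.
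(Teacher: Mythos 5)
Your proposal is correct and follows essentially the same route as the paper's own proof: the paper likewise expands the Nahm sum directly, enumerates the admissible states with $\frac{1}{2}(A(a,b)+B(a,b))\le 2$, finds exactly your families (i)--(iii) --- including, for each $3$-cycle, a single sign~$-1$ state with $a_p=1$ on the enclosed faces and $b_w=-1$ on the enclosed vertices --- and assembles the coefficients by multiplying with the expansion of $(q)_\infty^{c_2}$. The only substantive differences are that the paper works with the unreduced series $\Phi^{\text{TQFT}}_G(q)=\Phi_G(q)/(1-q)$, which makes the state count vertex-symmetric and avoids your $\deg(v^*)$ bookkeeping, and that the completeness of the enumeration which you rightly single out as the main obstacle (ruling out states with $(A,B)=(4,0),(2,2),(1,3)$ and all $(3,1)$ states other than the $3$-cycle ones) is exactly what the paper's Lemmas \ref{lem.triangle}, \ref{lem.triangle2} and \ref{lem.induction} are built to supply, via induction on deleting boundary faces.
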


If $G_1$ and $G_2$ are two planar graphs with distinguished boundary
edges $e_1$ and $e_2$, let $G_1 \cdot G_2$ denote their edge connected
sum along $e_1=e_2$ depicted as follows:
$$
\psdraw{connectedsum2}{3.3in}
$$
Let $P_r$ denote a planar polygon with $r$ edges when $r \geq 3$
and let $P_2$ denote the connected graph with two vertices and one edge,
a reduced form of a bigon. For a positive natural 
number $b$, consider the unary theta (when $b$ is odd) and false theta 
series (when $b$ is even) $h_b(q)$ given by
\begin{equation*}
\lbl{eq.ha}
h_b(q)=\sum_{n \in \BZ} \ve_b(n) \, q^{\frac{b}{2} n(n+1)-n}
\end{equation*}
where
\begin{equation*}
\ve_b(n) =\begin{cases}
(-1)^n  & \text{if $b$ is odd} \\
1       & \text{if $b$ is even and $n \geq 0$} \\
-1       & \text{if $b$ is even and $n<0$} 
\end{cases}
\end{equation*}
Observe that 
$$
h_1(q)=0, \qquad h_2(q)=1, \qquad h_3(q)=(q)_\infty \,.
$$
The following lemma (observed independently by Armond-Dasbach) 
follows from the Nahm sum for $\Phi_G(q)$ combined with a $q$-series 
identity (see Equation \eqref{eq.Sbb} below). This identity was proven by 
Armond-Dasbach \cite[Thm.3.7]{AD} 
and Andrews \cite{Andrews}.
 
\begin{lemma}
\lbl{lem.sum}
For all planar graphs $G$ and natural numbers $r \geq 3$ we have:
$$
\Phi_{G \cdot P_r}(q) = \Phi_G(q) \Phi_{P_r}(q) = \Phi_G(q) h_r(q) \,.
$$
\end{lemma}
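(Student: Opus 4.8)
The plan is to reduce both equalities to the single $q$-hypergeometric identity of Equation \eqref{eq.Sbb} by isolating, inside the Nahm sum \eqref{eq.defphi} for $\Phi_{G\cdot P_r}(q)$, the summation variables created when the polygon $P_r$ is glued onto $G$. Write the glued edge as $e=(v_1v_2)$, so that $v_1,v_2$ are the two vertices $P_r$ shares with $G$; the gluing produces exactly one new bounded face $p_0$ with $l(p_0)=r$, the $r-2$ new vertices $u_1,\dots,u_{r-2}$ of $P_r$ lying on the new unbounded face, and the $r-1$ new edges $(v_2u_1),(u_1u_2),\dots,(u_{r-2}v_1)$. I will hold a state $(a,b)$ of $G$ fixed and carry out the inner summation over the new variables $a_{p_0}$ and $b_{u_1},\dots,b_{u_{r-2}}$ first. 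By Lemma \ref{lem.reduced}(a) I may root both $G$ and $G\cdot P_r$ at $v_1$, so that $b_{v_1}=0$ in every term.

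First I would record the bookkeeping produced by the gluing; write $A_G,B_G,c_2(G)$ for the quantities \eqref{def.A}, \eqref{def.B} attached to $G$. Every face and edge of $G$ survives unchanged, so (summing the edge term of $A$ over each edge once, as in Theorem \ref{thm.2}(a)) one finds
\begin{align*}
A_{G\cdot P_r}-A_{G} &= \gamma(p_0)+2\big(b_{v_2}b_{u_1}+b_{u_1}b_{u_2}+\dots+b_{u_{r-2}}b_{v_1}\big),\\
B_{G\cdot P_r}-B_{G} &= 2\textstyle\sum_{j}b_{u_j}+(r-2)\,a_{p_0},
\end{align*}
with $\gamma(p_0)=r\,a_{p_0}^2+2a_{p_0}\big(b_{v_1}+b_{v_2}+\sum_j b_{u_j}\big)$; moreover $c_2(G\cdot P_r)=c_2(G)+(r-1)$ and the denominator of \eqref{eq.defphi} acquires precisely the factors $(q)_{a_{p_0}+b_{v_1}}(q)_{a_{p_0}+b_{v_2}}\prod_{j}(q)_{a_{p_0}+b_{u_j}}$ coming from the incidences of $p_0$. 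Substituting these increments into \eqref{eq.defphi} and pulling out everything that depends only on the $G$-state gives the factorization
$$
\Phi_{G\cdot P_r}(q)=(q)_\infty^{\,c_2(G)}\sum_{(a,b)\text{ of }G}(-1)^{B_G}\,\frac{q^{\frac12(A_G+B_G)}}{\prod_G(q)_{a_p+b_v}}\;\Theta_r\!\left(b_{v_2}\right),
$$
where $\Theta_r(y)$ is the inner sum over $a_{p_0},b_{u_1},\dots,b_{u_{r-2}}$ carrying the prefactor $(q)_\infty^{\,r-1}$, the sign $(-1)^{(r-2)a_{p_0}}$, and the increments above with $b_{v_1}=0$ and $b_{v_2}$ replaced by the parameter $y$.

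The crux is then Equation \eqref{eq.Sbb}, the identity of Armond--Dasbach \cite[Thm.3.7]{AD} and Andrews \cite{Andrews}: it evaluates this inner sum to
$$
\Theta_r(y)=h_r(q)\qquad\text{for every admissible }y,
$$
\emph{independently of the boundary value} $y$. Granting this, $\Theta_r(b_{v_2})=h_r(q)$ factors out of the $G$-sum, and since the remaining factor is exactly $\Phi_G(q)$ (rooted at $v_1$), the display collapses to $\Phi_{G\cdot P_r}(q)=\Phi_G(q)\,h_r(q)$. The same identity applied to the Nahm sum of the bare polygon $P_r$ (the instance of \eqref{eq.Sbb} with no surrounding graph) gives the second equality $\Phi_{P_r}(q)=h_r(q)$, and combining the two yields $\Phi_{G\cdot P_r}(q)=\Phi_G(q)\Phi_{P_r}(q)=\Phi_G(q)h_r(q)$.

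The main obstacle is exactly the boundary-independence asserted by \eqref{eq.Sbb}: a priori $\Theta_r(y)$ depends on $y$ both through the denominator factor $(q)_{a_{p_0}+y}$ and through the linear terms in the exponent that couple $y$ to $a_{p_0}$ and to the neighbouring vertex $b_{u_1}$, and this dependence does not cancel term by term. I expect to establish it by performing the new-vertex sums $b_{u_1},\dots,b_{u_{r-2}}$ one at a time: each is a one-sided $q$-exponential (Euler) sum that absorbs the coupling to its neighbour and telescopes the chain of $u_j$'s, reducing $\Theta_r(y)$ to a single theta-type sum over $a_{p_0}$ of precisely the shape in \eqref{eq.Sbb}, whose closed evaluation to the unary (false) theta series $h_r(q)$ is the cited Armond--Dasbach/Andrews result.
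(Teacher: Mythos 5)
Your proposal is correct and follows essentially the same route as the paper: both isolate the portion of the Nahm sum carried by the new face variable and the $r-2$ new vertex variables of $P_r$ (your $\Theta_r(y)$ is the paper's $S(y,0)$ up to the $(q)_\infty^{r-1}$ normalization), invoke the Armond--Dasbach/Andrews identity \eqref{eq.Sbb} — whose content is exactly the boundary-value independence $S(b_{r-1},0)=(q)_\infty^{-r+1}h_r(q)$ for all $b_{r-1}\geq 0$ — and then sum over the remaining $G$-variables to factor out $h_r(q)$. Your closing worry about re-deriving that independence by telescoping is unnecessary, since the cited identity already asserts it.
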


\begin{question}
\lbl{que.1}
Is it true that for all planar graphs $G_1$ and $G_2$ we have:
$$
\Phi_{G_1 \cdot G_2}(q) = \Phi_{G_1}(q) \Phi_{G_2}(q) ?
$$
\end{question}

As an illustration of Lemma \ref{lem.sum}, 
for the three graphs of Figure \ref{f.triple.knots}, we have:
$$
\Phi_{L8a8}(q)=\Phi_{8_{13}}(q)=h_4(q)h_3(q)^2 \,.
$$

\begin{figure}[htpb]
$$
\psdraw{triple}{3in}
$$
\caption{Three graphs $G_1$, $G_2$, $G_3$ and the corresponding alternating
links $L8a8$, $L8a8$ and $8_{13}$.}
\lbl{f.triple.knots}
\end{figure}

\begin{remark}
\lbl{exer.1}
Observe that the alternating planar projections of the graphs $G_1$
and $G_2$ of Figure \ref{f.triple.knots} are related by a {\em flype move}
\cite[Fig.1]{MT}.

Flyping a planar alternating link projection corresponds to the operation
on graphs shown in Figure \ref{f.flyping.graphs}.

If the planar graphs $G$ and $G'$ are related by flyping, then
$\Phi_G(q)=\Phi_{G'}(q)$, since the corresponding alternating links are
isotopic. 
\end{remark}

\begin{figure}[htpb]
$$
\psdraw{graph_flyping}{2in}
$$
\caption{A flyping move on a planar graph.}
\lbl{f.flyping.graphs}
\end{figure}

\begin{remark}
\lbl{rem.G9.10.12.16}
Theorem \ref{thm.1} might temp one to conjecture that $\Phi_G(q)$ depends
on the number of vertices and edges of $G$ and on the number of $k$-faces
of $G$ for $k \geq 3$. This is not true. For example, consider the three
graphs $G^9_{9}$, $G^9_{11}$ and $G^9_{13}$ of Figure \ref{f.graphs9}
shown here: 
$$
\psdraw{911}{0.7in} \qquad
\psdraw{913}{0.8in} \qquad
\psdraw{919}{0.7in}
$$
All three graphs have $7$ vertices, $9$ edges, $2$ square
faces and $2$ pentagonal faces.
The DT codes of the corresponding links are given by:

\begin{center}
\begin{tabular}{|c|l|}
\hline
$G^9_{10}$ & DTCode[$\{$16, 10, 14, 12, 2, 18, 6$\}$, $\{$4, 8$\}$] \\ \hline
$G^9_{12}$ & DTCode[$\{$6, 10, 14, 18, 4, 16, 8, 2, 12$\}$] \\ \hline
$G^9_{16}$ & DTCode[$\{$6, 10$\}$, $\{$4, 12, 18, 2, 16$\}$, $\{$8, 14$\}$] 
\\ \hline
\end{tabular}
\end{center}

\end{remark}
On the other hand, the colored Jones function
of the corresponding alternating links \cite{B-N} gives that 
$$
\begin{array}{|c|l|} \hline
G & \Phi_G(q) \\  \hline
G^9_{10} & 1 - 3 q + 3 q^2 + 2 q^3 - 7 q^4 + 3 q^5 + \dots \\ \hline
G^9_{12} & 1 - 3 q + 3 q^2 + q^3 - 7 q^4 + 6 q^5 + \dots \\ \hline
G^9_{16} & 1 - 3 q + 3 q^2 + q^3 - 8 q^4 + 6 q^5 + \dots \\ \hline
\end{array}
$$


\section{The connection between $\Phi_G(q)$ and alternating 
links}
\lbl{sec.alt}

In this section explain connection between $\Phi_G(q)$ and the
colored Jones function of the alternating link $L_G$
following \cite{GL1}.

\subsection{From planar graphs to alternating links}

Given a planar graph $G$ (possibly with loops or multiple edges), 
there is an alternating planar projection 
of a link $L_G$ given by:
$$
\psdraw{tait2link}{2in}
$$

\subsection{From alternating links to planar (Tait) graphs}
\lbl{subsection.tait}

Given a diagram $D$ of a {\it reduced alternating non-split} link $L$, its 
Tait graph can be constructed as follows: the diagram $D$ gives rise to a 
polygonal complex of $\mathbb{S}^2=\mathbb{R}^2\cup \{\infty\}$. Since $D$ is 
alternating, it is possible to label each polygon by a color $b$ (black) or $w$
(white) such that at every crossing the coloring looks as follows:

\begin{figure}[htb]
\includegraphics[scale=1]{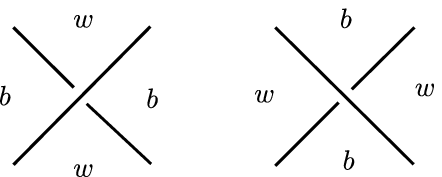}
\caption{The checkerboard coloring of a link diagram}
\end{figure}

There are exactly two ways to color the regions of $D$ with black and white
colors. In this note we will work with the one whose unbounded region has 
color $w$. In each $b$-colored polygon (in short, $b$-polygon) 
we put a vertex and connect two of them 
with an edge if there is a crossing between the corresponding polygons. 
The resulting graph is a planar graph called the Tait graph associated with 
the link diagram $D$. Note that the Tait graph is always planar but not
necessarily reduced. Although the reduction of the Tait graph may change
the alternating link and its colored Jones polynomial, it does not
change the limit of the shifted colored Jones function in Theorem 
\ref{thm.phi0} because of Lemma \ref{lem.reduced}.

\subsection{The limit of the shifted colored Jones function}
\lbl{sub.0limit}

When $L$ is an alternating link, 
the colored Jones polynomial $J_{L,n}(q) \in \BZ[q^{\pm \frac{1}{2}}]$ 
(normalized to be $1$ at the unknot, and colored by the $n$-dimensional 
irreducible representation of $\mathfrak{sl}_2$ \cite{GL1}) 
has lowest $q$-monomial with coefficient $\pm 1$, and after dividing by 
this monomial, we obtain the {\em shifted} colored Jones polynomial 
$\hat J_{L_G,n}(q) \in 1 + q \BZ[q]$. Let 
$\la f(q) \ra_N$ denotes the coefficient of $q^N$ in $f(q)$.
The limit $f(q)=\lim_n f_n(q) \in \BZ[[q]]$ of a sequence of polynomials
$f_n(q) \in \BZ[q]$ is defined as follows \cite{GL1}. For every natural
number $N$, there exists a natural number $n_0(N)$ such that
$\la f_n(q) \ra_N=\la f(q) \ra_N$ for all $n \geq n_0(N)$.

\begin{theorem}
\lbl{thm.phi0}\cite[Thm.1.10]{GL1}
Let $L$ be an alternating link projection and $G$ be its Tait graph. 
Then the following limit exists
\begin{equation}
\lbl{eq.phiJ}
\lim_{n \to \infty} \hat J_{L,n}(q)=\Phi_G(q) \in \BZ[[q]]
\end{equation}
\end{theorem}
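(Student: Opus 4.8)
The plan is to compute $\hat J_{L,n}(q)$ directly from a skein-theoretic state sum on the reduced alternating diagram $D$ and then extract its stable limit. First I would cable each component of $D$ by the $n$-dimensional representation, inserting the $(n-1)$-st Jones--Wenzl idempotent on every strand, so that the Kauffman bracket of the cabled diagram becomes a finite sum of evaluations of trivalent Temperley--Lieb networks. Organizing this computation along the checkerboard structure of Section~\ref{subsection.tait}, each black region (a vertex $v$ of the Tait graph $G$) and each white region (a face $p$ of $G$) carries an internal color, and resolving the crossings that sit on the edges of $G$ produces a product of local theta and tetrahedral symbols. The natural summation variables are the deviations of these internal colors from the cabling parameter $n$; under an identification of the form ``face color $\mapsto a_p + n$, vertex color $\mapsto b_v$'' (with the reference value $a_\infty=0$ on the unbounded region), the nonnegativity constraints imposed by the idempotents turn into exactly the admissibility condition $a_p+b_v\ge 0$ of Section~\ref{sub.qplanar}.

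The second step is to track $q$-degrees and normalize. Because $D$ is reduced alternating it is adequate, so the lowest monomial of the cabled bracket has coefficient $\pm1$ with no cancellation at the bottom; dividing by this monomial yields $\hat J_{L,n}(q)\in 1+q\,\BZ[q]$, and the unique state with all variables zero contributes the constant term, matching $\langle\Phi_G(q)\rangle_0=1$ of \eqref{eq.coeff0}. I would then compute the $q$-degree attached to a state $(a,b)$ using the standard degree formulas for the quantum factorials $(q)_m$ and for the theta and tetrahedral coefficients. The goal is to show that, after the global shift absorbed by the normalization, this exponent is precisely $\tfrac12\bigl(A(a,b)+B(a,b)\bigr)$, that the denominators assemble into $\prod_{(p,v):\,v\in p}(q)_{a_p+b_v}$, and that the sign coming from the idempotent expansion is exactly $(-1)^{B(a,b)}$, thereby reproducing Equation~\eqref{eq.defphi} term by term.

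The third step is the stability itself. By Theorem~\ref{thm.2}, whenever $\tfrac12(A(a,b)+B(a,b))\le N$ the variables $a_p$ and $b_v$ are bounded effectively in terms of $N$, so only finitely many states contribute to the coefficient of $q^N$; moreover every such state is realizable by the cabling for all $n$ larger than some $n_0(N)$, and the bounded colors lie strictly below the saturation value $n$ so that the tetrahedral symbols take their generic (unsaturated) form. Hence $\langle\hat J_{L,n}(q)\rangle_N$ is independent of $n$ once $n\ge n_0(N)$, which is exactly the definition of the limit in Section~\ref{sub.0limit}, and the stable value equals $\langle\Phi_G(q)\rangle_N$.

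I expect the main obstacle to be the second step: pinning down the $q$-degree of the tetrahedral symbols and verifying that the cross terms $2\sum_{e=(v_iv_j)}b_{v_i}b_{v_j}$ in $A(a,b)$, together with the linear form $B(a,b)$, emerge with exactly the right coefficients and signs, rather than merely up to error terms that might spoil the low-order coefficients. Controlling the normalizations of the theta and tetrahedral evaluations, and showing that the subleading parts of the idempotents do not perturb the bottom of each coefficient, is where the real work lies. The manifest nonnegativity of the quadratic and linear forms in Theorem~\ref{thm.2}(a),(b) is what ultimately licenses rearranging the state sum into the convergent Nahm sum $\Phi_G(q)$.
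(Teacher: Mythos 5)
A point of order first: the paper you were asked to match does \emph{not} prove Theorem~\ref{thm.phi0} at all. The statement is imported verbatim from \cite[Thm.1.10]{GL1} and is used in this paper only as an input (via the strong form \eqref{eq.Phiconv} of Remark~\ref{rem.reduced}, e.g.\ in the proof of Theorem~\ref{thm.1} and as a consistency check in Section~\ref{sub.comment}). So there is no internal proof to compare yours against; the relevant comparison is with \cite{GL1}. Measured against that, your outline follows the same broad lines as the actual proof: cable the reduced alternating diagram with Jones--Wenzl idempotents, fuse along the checkerboard structure so the bracket becomes a state sum over colorings of the black and white regions with theta and tetrahedral weights, identify the shifted colors with the admissible states $(a_p,b_v)$ of Section~\ref{sub.qplanar}, use adequacy of the reduced alternating diagram to control the bottom monomial, and deduce stability from degree estimates.

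That said, as a proof your proposal has two genuine gaps. First, your second step --- the only place the specific formula \eqref{eq.defphi} can come from --- is announced rather than performed, and it is the entire content of the theorem: one must write the unsaturated expansions of the theta and tetrahedral symbols, check that their minimal $q$-degrees sum to exactly $\tfrac12\bigl(A(a,b)+B(a,b)\bigr)$ after normalization, and that the denominators and signs assemble to $\prod_{(p,v):\,v\in p}(q)_{a_p+b_v}$ and $(-1)^{B(a,b)}$. Second, your step 3 conflates two different finiteness statements. Theorem~\ref{thm.2} bounds \emph{which} states can contribute below $q^N$ to the limiting series $\Phi_G(q)$, but it says nothing about the $n$-dependence \emph{inside} each term of the finite-$n$ state sum: the local weights depend on $n$ through the external color $n-1$, and one must prove that for a fixed state their coefficients below degree $N$ stabilize as $n\to\infty$ (quantum binomials degenerating to $1/(q)_m$ and $(q)_\infty$ factors), uniformly enough to interchange the limit with the infinite sum. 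That uniform stabilization is the real analytic content in \cite{GL1}, and it is what yields the stronger statement \eqref{eq.Phiconv} (stability already for $n>N$), which your $n_0(N)$ formulation does not recover. A smaller but concrete omission: your cabled bracket computes the \emph{unreduced} invariant, whose stable limit is $\Phi^{\text{TQFT}}_G(q)$ of \eqref{phi.TQFT}; to land on the theorem as stated you must divide by the cabled unknot evaluation and check that this produces precisely the factor $1-q$ of Remark~\ref{rem.reduced}(b).
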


\begin{remark}
\lbl{rem.reduced}
\rm{(a)}
The convergence statement in the above theorem holds in the following 
strong form \cite{GL1}: for every natural number $N$, and for $n > N$
we have:
\begin{equation}
\lbl{eq.Phiconv}
\la \hat J_{L,n}(q) \ra_N = \la \Phi_G(q) \ra_N \,.
\end{equation}
\rm{(b)}
$\Phi_G(q)$ is the \emph{reduced} version of the one in 
\cite[Thm.1.10]{GL1} and differs from the unreduced version
$\Phi^{\text{TQFT}}_G(q)$ by 
$$
\Phi_G(q)=(1-q)\Phi^{\text{TQFT}}_G(q) \,,
$$
where
\begin{equation}
\lbl{phi.TQFT}
\Phi^{\text{TQFT}}_G(q) = (q)_\infty^{c_2}\sum\limits_{(a,b)}(-1)^{B(a,b)} 
\frac{q^{\frac{1}{2}A(a,b)+\frac{1}{2}B(a,b)}}{\prod\limits_{(p,v): v\in p}(q)_{a_p+b_v}}
\end{equation}
and the summation $(a,b)$ is over all admissible states where we
do not assume that $b_v=0$ for a fixed vertex $v$ in the unbounded face of
$G$.
\end{remark}


\section{Proof of Theorem \ref{thm.2}}
\lbl{sec.thm2}

In this section we prove Theorem \ref{thm.2}. Part (a) follows from
completing the square in Equation \eqref{def.A}:

\begin{align*}
A(a,b) &= \sum\limits_p (l(p)a_p^2+2a_p(\sum\limits_{v\in p}b_v))
+2\sum\limits_{e=(v_iv_j)}b_{v_i}b_{v_j}\\
&=  \sum\limits_p (l(p)(a_p+b_p)^2+2a_p(
\sum\limits_{v\in p}b_v-l(p)b_p)-l(p)b_p^2+2\sum\limits_{e=(v_iv_j)}b_{v_i}b_{v_j})\\
&= \sum\limits_p (l(p)(a_p+b_p)^2+2(a_p+b_p)(\sum\limits_{v\in p}b_v-l(p)b_p)
+\sum\limits_{e=(v_iv_j)\in p}(b_{v_i}-b_p)(b_{v_j}-b_p))\\
& +  \sum\limits_{e=(v_iv_j)\in p_\infty}b_{v_i}b_{v_j}
\end{align*}

For the remaining parts of Theorem \ref{thm.2}, fix a $2$-connected 
planar graph $G$, a vertex $v_0$ of $G$ and a bounded face $p_0$ 
of $G$ that contains $v_0$. 

\begin{lemma}
\lbl{lem.dist}
There exists a graph $\Gamma$ which depends on $G,v_0,p_0$ 
such that:
\begin{itemize}
\item The vertices of $\Gamma$ are vertices of $G$ as well as one 
vertex $v_p$ for each bounded face $p$ of $G$.
\item The edges of $\Gamma$ are of the form $vv_p$ where $v$ is a 
vertex of $G$ and $p$ is a bounded face that contains $v$. 
\item $v_0v_{p_0}$ is an edge of  $\Gamma$.
\item Every vertex $v$ in $G$ has degree $n_v$ in $\Gamma$ where
\begin{equation*}
n_v= \begin{cases} 
2 ~ & \text{if $v$ is not a boundary vertex}\\
\leq 2 ~ &  \text{if $v$ is a boundary vertex}
\end{cases}
\end{equation*}
\end{itemize}
\end{lemma}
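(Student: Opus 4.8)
The plan is to realize $\Gamma$ as a degree-constrained subgraph of the vertex--face incidence graph of the plane graph $G$. Let $R$ be the bipartite graph whose two vertex classes are the vertices of $G$ and the faces of $G$ (bounded and unbounded), with an edge joining $v$ to a face $f$ whenever $v$ lies on the boundary of $f$; this is the radial graph of the embedding. First I would record the structural consequences of $2$-connectivity: every face of a $2$-connected plane graph is bounded by a simple cycle, so a given vertex meets a given face in at most one corner, and hence $v$ is adjacent in $R$ to exactly $\deg_G(v)\ge 2$ distinct faces. Moreover a boundary vertex (a vertex of $p_\infty$) meets $p_\infty$ in exactly one corner---its two outer-cycle edges are consecutive in the rotation at $v$, for otherwise $v$ would separate the unbounded region and be a cut vertex---while an interior vertex meets no unbounded corner at all.

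Deleting the unbounded-face vertex from $R$, these facts translate into exactly the data the lemma needs: every interior vertex $v$ is adjacent to at least two distinct bounded faces, and every boundary vertex to at least one. I would then build $\Gamma$ directly on the vertex set prescribed in the first bullet (the vertices of $G$ together with one $v_p$ for each bounded face $p$) by choosing, for each vertex $v$ of $G$, a set of incident bounded faces respecting the degree bound: two distinct faces for each interior vertex, at most two for each boundary vertex, and---crucially---arranging that the choice at $v_0$ includes $p_0$. This last requirement is harmless, since we may always place a prescribed incident bounded face among the (at most) two chosen at a vertex. By construction every edge of $\Gamma$ has the form $v v_p$ with $v\in p$ and $p$ bounded, the edge $v_0 v_{p_0}$ is present, and the degree conditions in the final bullet hold.

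The only genuine point to verify is the availability asserted above, namely that an interior vertex always has two \emph{distinct} bounded faces around it. This is precisely where $2$-connectivity enters: it forces each face to be a cycle, which rules out a single face wrapping around a degree-two vertex from both sides, and interiority excludes the unbounded face from the relevant corners. I expect this to be the main---indeed essentially the only---obstacle in the bare existence statement, everything else being a local, vertex-by-vertex choice.

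Finally, I would point out that for the distance estimates that feed into Theorem~\ref{thm.2}(c) one really wants $\Gamma$ to be \emph{connected} through the distinguished edge $v_0 v_{p_0}$, and the delicate refinement is to make the local choices globally coherent. The way to do this is to grow $\Gamma$ outward from $v_0 v_{p_0}$, processing the bounded faces in the order given by a spanning tree of the inner dual of $G$, and assigning at each step the second incident face of a newly reached interior vertex so as never to create a vertex of degree exceeding $2$. Maintaining degree exactly $2$ at interior vertices while preserving connectivity is the subtle part; it is controlled by the planarity of $R$ together with the tree order, with the boundary vertices---allowed degree at most $2$---furnishing the slack that makes the procedure terminate.
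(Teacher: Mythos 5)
Your argument does establish the four bullet points exactly as written, and by a far more direct route than the paper: the paper first reduces to the case where every bounded face is a triangle, then inducts on the number of interior vertices (removing an interior vertex, invoking the inductive hypothesis on a re-triangulated graph, then dragging a copy of a boundary vertex back inside), and finally collapses the triangle-vertices inside each original face $p$ into the single vertex $v_p$. Your radial-graph observations (in a $2$-connected plane graph each vertex sees $\deg_G(v)$ pairwise distinct faces, an interior vertex sees only bounded ones, and a boundary vertex meets $p_\infty$ at exactly one corner) are correct and do make the literal statement a matter of trivial local choices.

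That, however, is exactly the problem: the statement as printed is weaker than what the lemma is used for, and the paper's elaborate construction is engineered to produce a property that your $\Gamma$ need not have. The decomposition \eqref{eq.dist} that this lemma must support reads $B(a,b)=\sum_{\hat e=(vv_p)}(a_p+b_v)+\sum_v(2-n_v)b_v$; expanding the first sum as $\sum_p \deg_\Gamma(v_p)\,a_p+\sum_v n_v b_v$ and comparing with \eqref{def.B}, the identity forces $\deg_\Gamma(v_p)=l(p)-2$ for every bounded face $p$. This face-degree condition does not appear among the bullets, but it is what the paper's proof delivers: each triangle's face-vertex carries exactly one edge of $\Gamma$, so after collapsing, $v_p$ has degree $l(p)-2$ (compare the worked example \eqref{dist.L}, where each quadrilateral face occurs in exactly two terms). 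Your vertex-by-vertex greedy choice gives no control over the degrees of the $v_p$: for the square with one diagonal (two triangular faces $p_1,p_2$, all four vertices on the boundary) it permits $\Gamma$ to be the single edge $v_0v_{p_1}$, and then the right-hand side of \eqref{eq.dist} contains no $a_{p_2}$ term at all, so Theorem \ref{thm.2}(b) fails for that $\Gamma$. Euler's formula moreover turns the face-degree condition into the global constraint $\sum_{v\,\text{boundary}} n_v=V_\infty-2$, so the choices at boundary vertices cannot be made independently, contrary to the spirit of your construction. Finally, the refinement you single out as the delicate point --- connectivity of $\Gamma$ through $v_0v_{p_0}$ --- is a red herring: nothing in the paper needs $\Gamma$ connected; Corollary \ref{cor.2} uses only the presence of the single edge $v_0v_{p_0}$ together with termwise nonnegativity, and the genuine global difficulty is the face-degree constraint, which your proposal does not address.
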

\begin{proof}
First note we can assume that each face $p$ of $G$ is a triangle. Indeed, 
if a face $p$ is not a triangle, we can divide it into a union of triangles 
by creating new edges inside $p$. Once we have succeeded in constructing a 
$\Gamma$ for the resulted graph, we can remove the added edges in $p$ and 
collapse all the interior vertices of the newly created triangles in $p$ 
into one single vertex $v_p$. The figures below illustrate the above process.
$$
\psdraw{triangles}{3in}
$$
Now assuming that all faces of $G$ are triangles, let us proceed by 
induction on the number of vertices of $G$. If there is no interior vertex 
in $G$ then  since the unbounded face $p_\infty$ is also a triangle, $G$ 
itself is a triangle and we are done. Therefore let us assume that there is 
an interior vertex $v$ of $G$. Locally the graph at $v$ looks like the 
following:
$$
\psdraw{induction}{1in}
$$
Next we remove $v$ and all of the edges incident to it from $G$ and denote 
the resulted face by $p$. Let $w$ be a vertex of $p$ and connect $w$ to 
each of the vertices of $p$ by an edge. Denote the resulted graph by $G_{w}$. 
By induction hypothesis, there exists a graph $\Gamma_w$
for $G_{w}$. At $w$ make another copy of the vertex called $w'$. Now drag 
$w'$ into the interior of $p$ while keeping it connected to vertices of $p$ 
and at the same time delete the edges that are incident to $w$ and that lie 
in the interior of $p$. This has to be done in such a way that all the 
vertices of $\Gamma_w$ still lie in the interior of the new triangles that 
have $w'$ as a vertex. Create two new vertices in the interior of the two
triangles in $p$ that contain $w$ as a vertex and connect them to 
$w'$. The resulted graph satisfies the requirements of the lemma. 
The figures below explain the process.
$$
\psdraw{induction1}{4in}
$$
\end{proof}

\begin{proof}(of part (b) of Theorem \ref{thm.2})
We can decompose $B(a,b)$ into a finite sum of nonnegative terms as follows
\begin{equation}
\lbl{eq.dist}
B(a,b)=\sum\limits_{\hat{e}=(vv_p)}(a_p+b_v)+\sum\limits_v (2-n_v)b_v
\end{equation}
where the summation is over all edges of $\Gamma$.
\end{proof}

\begin{corollary}
\lbl{cor.2}
For a pair $(p,v)$ where $p$ is a face of $G$ and $v$ is a vertex  of $p$ 
then $B(a,b)\geq a_{p}+b_{v}$.
\end{corollary}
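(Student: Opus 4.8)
The plan is to read the inequality directly off the nonnegative decomposition \eqref{eq.dist} of $B(a,b)$, exploiting the fact that the auxiliary graph $\Gamma$ of Lemma \ref{lem.dist} may be built around \emph{any} prescribed incident pair. First I would fix the pair $(p,v)$ with $p$ a (bounded) face and $v$ a vertex of $p$, and apply Lemma \ref{lem.dist} with the base data $v_0=v$ and $p_0=p$. This choice is legitimate precisely because $p$ is a bounded face containing $v$, and the lemma then guarantees that $v v_p$ is an edge of the resulting graph $\Gamma$.

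With this $\Gamma$ in hand, I would expand $B(a,b)$ using \eqref{eq.dist}, namely $B(a,b)=\sum_{\hat e=(w v_{p'})}(a_{p'}+b_w)+\sum_w(2-n_w)b_w$, and argue that every summand is nonnegative. Each edge $\hat e=(w v_{p'})$ of $\Gamma$ arises, by the construction in Lemma \ref{lem.dist}, from a vertex $w$ lying on the bounded face $p'$, so $a_{p'}+b_w\ge 0$ by admissibility of the state $(a,b)$. In the second sum the interior vertices contribute nothing, since there $n_w=2$, while every boundary vertex $w$ satisfies $n_w\le 2$ together with $b_w=a_\infty+b_w\ge 0$ (because $a_\infty=0$), whence $(2-n_w)b_w\ge 0$ as well. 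This is exactly the nonnegativity already observed in the proof of part (b) of Theorem \ref{thm.2}.

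Finally, since $v v_p$ is one of the edges of $\Gamma$, the term $a_p+b_v$ occurs as a summand in the first sum; discarding all the remaining nonnegative terms yields $B(a,b)\ge a_p+b_v$, as claimed. The only point that needs care is the one that makes the whole argument go through: that Lemma \ref{lem.dist} can be invoked with an arbitrarily chosen incident pair $(p,v)$ serving as its base data $(p_0,v_0)$, thereby forcing $a_p+b_v$ to appear as a distinguished edge-term in \eqref{eq.dist}. Granting that, the corollary is immediate, and no further estimation is required.
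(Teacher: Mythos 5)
Your proof is correct and follows essentially the same route as the paper's own argument: the paper likewise invokes Lemma \ref{lem.dist} (with base data chosen so that $vv_p$ is an edge of $\Gamma$) and then reads the inequality off the nonnegative decomposition \eqref{eq.dist}. The only difference is one of detail, since you spell out why each summand in \eqref{eq.dist} is nonnegative, which the paper takes for granted from part (b) of Theorem \ref{thm.2}.
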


\begin{proof}
This is a direct consequence of Equation \eqref{eq.dist} since by Lemma 
\ref{lem.dist} there exists 
a graph $\Gamma$ that contains $vv_{p}$ as an edge.
\end{proof}

\begin{proof}(of part (c) of Theorem \ref{thm.2})
Let us prove the linear bound on the $b_v$ first. Let us
 set $b_{v_0}=0$ where $v_0$ is a boundary vertex of $G$. Let 
$p_0$ be a bounded face that contains $v_0$, so we have $a_{p_0}+b_{v_0}\geq 0$ .
Since $0\leq B(a,b)\leq 2N$ by part (b) of Theorem \ref{thm.2} and Corollary 
\ref{cor.2} we have that $0\leq a_{p_0}+b_{v_0}\leq 2N$. Since $b_{v_0}=0$ this 
means that 
$0\leq a_{p_0}\leq 2N$. Similarly if $v$ is another vertex of $p_0$ then by 
Corollary
\ref{cor.2} we have  $0\leq a_{p_0}+b_{v}\leq 2N$ which 
implies that $-2N\leq b_v\leq 2N$. Let $G'$ be the graph obtained from $G$ 
by removing the boundary edges of $p_0$. Choose a face $p'$ of $G'$ and a 
vertex $v'\in p'$ that also belongs to the removed face $p_0$. Repeat the 
above process with $(p',v')$ we have that $-4N\leq b_{v''}\leq 4N$ for any 
$v''\in p'$. Continuing this process until all faces of $g$ are covered
have that $|b_v|\leq dN$ for all vertices $v$ of $G$.

To prove the bound for the $a_p$'s, note that from part (a) of Theorem
\ref{thm.2} we have that $\frac{e(p)}{2}(a_p+b_v)^2\leq N$ for all 
bounded faces $p$ and all vertices $v$ of $G$. This implies that 
$|a_p+b_v|\leq \sqrt{\frac{2}{e_p}}\sqrt{N}$. Since $|b_v|\leq dN$ this 
implies that $|a_p|\leq \sqrt{\frac{2}{e_p}}\sqrt{N}+dN$. For the lower bound 
of $a_p$, note that since $a_p+b_v\geq 0$ we have $a_p\geq -b_v\geq -dN$.
\end{proof}


\section{The coefficients of $1$, $q$ and $q^2$ in $\Phi_G(q)$}
\lbl{sec.thm1}

\subsection{Some lemmas}
\lbl{sub.lemmas}

In this section we prove Theorem \ref{thm.1}, using the unreduced series
$\Phi^{\text{TQFT}}_G(q)$ of Equation \eqref{phi.TQFT}. 
Our admissible states $(a,b)$ in this section do not satisfy the property
that $b_v=0$ for some vertex $v$ of the unbounded face of $G$.

Since $A(a,b)+B(a,b) \geq 0$ for an admissible state $(a,b)$ with equality
if and only if $(a,b)=(0,0)$ (as shown in Theorem \ref{thm.2}), 
it follows that the coefficient of $q^0$ in 
$\Phi_G(q)$ is $1$. For the remaining of the proof of Theorem \ref{thm.1}
we will use several lemmas.

\begin{lemma}
\lbl{lem.triangle}
Let $G$ be a $2$-connected planar graph whose unbounded face has $V_\infty$ 
vertices. If $(a,b)$ is an admissible state such that 
\begin{enumerate}
\item $b_v=b_{v'}=1$ where $vv'$ is an edge of $p_\infty$,
\item $a_p+b_p=0$ for any face $p$ of $G$,
\item $(b_{v_1}-b_p)(b_{v_2}-b_p)=0$ for any face $p$ of $G$ and edge $v_1v_2$ 
of $p$,
\end{enumerate}
then 
\begin{itemize}
\item
$b_v \geq 1$ for all vertices $v$, 
\item
$a_p=-1$ for all faces $p\neq p_{\infty}$, and 
\item
$B(a,b)\geq 2+V_\infty$.
\end{itemize}
\end{lemma}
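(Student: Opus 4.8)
The plan is to isolate the real content in a single claim, namely that $b_p=1$ for every bounded face $p$, and then read off the three bullets. First I would record the easy reductions. By condition (2) applied to the bounded faces, $a_p=-b_p$, so ``$b_p=1$'' is the same as ``$a_p=-1$''; this will give the second bullet. Moreover, in a $2$-connected plane graph every vertex lies on the boundary of at least one bounded face $p$, and $b_v\ge b_p$ by the very definition $b_p=\min\{b_w:w\in p\}$; hence once the claim is known, $b_v\ge 1$ for all $v$, which is the first bullet. So the heart of the matter is the claim, and I would prove it by a flood--fill (propagation) argument over the bounded faces.

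For the seed I would use condition (1): since $G$ has no bridges, the edge $vv'$ of $p_\infty$ bounds exactly one bounded face $p_*$. Applying condition (3) to $p_*$ and the edge $vv'$, and using $b_v=b_{v'}=1$, gives $(1-b_{p_*})^2=0$, so $b_{p_*}=1$. The inductive step is the crux: if a bounded face $p$ has $b_p=1$ and shares an edge $xy$ with a bounded face $p'$, then $b_{p'}=1$. Indeed $b_x,b_y\ge b_p=1$, and condition (3) for $p$ on the edge $xy$ forces $\min(b_x,b_y)=b_p=1$, so $b_{p'}\le\min(b_x,b_y)=1$; on the other hand condition (3) for $p'$ on the same edge $xy$ gives $b_{p'}\in\{b_x,b_y\}$, hence $b_{p'}\ge 1$. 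Therefore $b_{p'}=1$.

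To finish the claim I would spread $b_p=1$ from $p_*$ to all bounded faces. Because $G$ is $2$-connected, its outer boundary is a simple cycle and the interior is a disk subdivided by $G$, so the weak dual (bounded faces, adjacent when they share an edge) is connected; walking along this connected graph from $p_*$ and applying the inductive step at each edge yields $b_p=1$ for every bounded face. I expect this connectivity of the weak dual to be the only delicate point, though it is standard for $2$-connected plane graphs; everything else in the propagation is a two-line application of condition (3) on the two faces meeting along the shared edge.

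For the third bullet I would substitute $a_p=-1$ into the definition \eqref{def.B} of $B(a,b)$. Since each edge lies on exactly two faces and $p_\infty$ is a simple $V_\infty$-cycle, $\sum_{p\text{ bounded}}l(p)=2c_2-V_\infty$, while Euler's formula gives $c_2-c_1+1$ bounded faces. A short computation then produces $B(a,b)=2\sum_v b_v+V_\infty-2c_1+2$. As $b_v\ge 1$ for each of the $c_1$ vertices, we have $\sum_v b_v\ge c_1$, and therefore $B(a,b)\ge V_\infty+2$, completing the proof.
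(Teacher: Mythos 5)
Your proof is correct and takes essentially the same route as the paper's: seed $b_{p_*}=1$ at the unique bounded face containing the outer edge $vv'$, propagate $b_p=1$ across each shared edge by applying condition (3) to both incident bounded faces, then bound $B(a,b)$ using $a_p=-1$, $\sum_{p}l(p)=2c_2-V_\infty$ and Euler's formula. The only difference is that you spell out the connectivity of the weak dual of a $2$-connected plane graph, a point the paper leaves implicit in its ``by a similar argument'' step.
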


\begin{proof}
Let $p$ be the bounded face that contains $v,v'$. We have 
$(b_v-b_p)(b_{v'}-b_p)=0$ so $b_p=1$ since $b_v=b_{v'}=1$. (2) then implies 
that $a_p=-b_p=-1$ and thus $b_w\geq b_p=1$ for all $w\in p$.
\begin{figure}[htpb]
 $$\psdraw{triangle}{1.5in}$$
\end{figure}
Let $v_1v_1'$ be another edge of $p$ and let $p_1\neq p$ bea bounded face 
that contains $v_1v_1'$. Since $(b_{v_1}-b_p)(b_{v_1'}-b_p)=0$ we have 
$\min\{b_{v_1},b_{v_1'}\}=b_p=1$. So from $(b_{v_1}-b_{p_1})(b_{v_1'}-b_{p_1})=0$ 
we have that $b_{p_1}=1$. Therefore $a_{p_1}=-1$ and $b_w\geq b_{p_1}=1$ for 
any vertex $w\in p_1$. By a similar argument we can show that $b_v\geq 1$ 
for every vertex $v$ and $a_p=-1$ for every face $p$ of $G$. Let 
$p_1,p_2, \dots, p_f$  be the bounded faces of $G$, where $f=F_G-1$. Then 
from Equation \eqref{def.B} we have
\begin{align*}
B(a,b) & = -\sum\limits_{j=1}^f (l(p_j)-2)+2\sum\limits_{v}b_v\\
& \geq  -\sum\limits_{j=1}^f l(p_j)+2f+2c_1\\
& =  -(2c_2-V_\infty) +2F_G-2+ 2c_1\\
& =  2(c_1-c_2+F_G)-2+V_\infty\\
& =  2+V_\infty
\end{align*}
\end{proof}

The proof of the next lemma is similar to the one of Lemma \ref{lem.triangle}
and is therefore omitted.

\begin{lemma}
\lbl{lem.triangle2}
Let $G$ be a $2$-connected planar graph whose unbounded face has 
$V_\infty$ vertices. If $(a,b)$ is an admissible state such that 
\begin{enumerate}
\item $b_v=b_{v'}=0$ and $(b_v-b_p)(b_{v'}-b_p)=1$ where $p$ is a boundary 
face and $vv'$ is a boundary
 edge that belongs to $p$,
\item $a_p+b_p=0$ for any face $p$ of $G$,
\item $(b_{v_1}-b_p)(b_{v_2}-b_p)=0$ for any face $p$ of $G$ and edge 
$v_1v_2$ not 
on the boundary of $p$.
\end{enumerate}
Then $b_w \geq -1$ for all vertices $w$, $a_p=1$ for all 
faces $p\neq p_{\infty}$ and $B(a,b)\geq V_\infty-2$.  Furthermore $B(a,b) = V_\infty-2$ if and only if 
\begin{itemize}
\item $b_v=0$ for all boundary vertices $v$ and $b_w=-1$ for all other vertices $w$.
\item $a_p=1$ for all faces $p$.
\end{itemize}
\end{lemma}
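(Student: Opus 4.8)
The plan is to mirror the structure of the proof of Lemma \ref{lem.triangle}, since the two lemmas differ only in the ``sign'' of the configuration: where Lemma \ref{lem.triangle} forces $a_p=-1$ and $b_v\geq 1$, here the hypotheses will force $a_p=1$ and $b_v\geq -1$. First I would pick a boundary face $p$ and a boundary edge $vv'$ of $p$ as in hypothesis (1), so that $b_v=b_{v'}=0$ and $(b_v-b_p)(b_{v'}-b_p)=1$. Since $b_v=b_{v'}=0$, the product $(0-b_p)(0-b_p)=b_p^2=1$ forces $b_p=\pm1$; I must rule out $b_p=-1$. The definition $b_p=\min\{b_w: w\in p\}$ gives $b_p\leq b_v=0$, and since $b_p=\pm1$ this already selects $b_p=-1$, so in fact $b_p=-1$ and hypothesis (2) then yields $a_p=-b_p=1$. (I would double-check the sign conventions against the hypotheses as stated, since the asserted conclusion is $a_p=1$.)

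Next I would propagate this across the graph exactly as in Lemma \ref{lem.triangle}, using the $2$-connectivity of $G$ to walk from face to face across shared edges. For an interior edge $v_1v_2$ of $p$, hypothesis (3) gives $(b_{v_1}-b_p)(b_{v_2}-b_p)=0$, so at least one endpoint attains $b_p=-1$; passing to an adjacent face $p_1$ sharing that edge and applying (2) and (3) again forces $b_{p_1}=-1$, hence $a_{p_1}=1$ and $b_w\geq b_{p_1}=-1$ for all $w\in p_1$. Iterating over all bounded faces (which $2$-connectivity guarantees are reachable by a chain of edge-adjacencies) establishes $a_p=1$ for every bounded face and $b_w\geq -1$ for every vertex.

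For the lower bound on $B(a,b)$, I would substitute into Equation \eqref{def.B}. With $a_p=1$ for all $f=F_G-1$ bounded faces, the face term is $\sum_p(l(p)-2)=\sum_p l(p)-2f=(2c_2-V_\infty)-2(F_G-1)$, and with $b_v\geq -1$ the vertex term satisfies $2\sum_v b_v\geq -2c_1$. Combining and invoking Euler's formula $c_1-c_2+F_G=2$ should collapse the expression to $B(a,b)\geq V_\infty-2$, paralleling the computation in Lemma \ref{lem.triangle}. The equality analysis then comes from tracking when each inequality is tight: $2\sum_v b_v=-2c_1$ forces $b_w=-1$ at every vertex \emph{except} where the state is pinned higher. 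Here the boundary vertices are constrained to $b_v=0$ by hypothesis (1) and the unbounded-face structure, so equality requires $b_v=0$ on the $V_\infty$ boundary vertices and $b_w=-1$ on all interior vertices, together with $a_p=1$ everywhere; plugging these back recovers $B=V_\infty-2$ exactly.

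The main obstacle I anticipate is the equality characterization rather than the inequality itself. The inequality follows mechanically from the propagation argument and Euler's formula, but pinning down \emph{precisely} which vertices must equal $-1$ versus $0$ requires care: I must verify that the hypotheses genuinely force $b_v=0$ on all $V_\infty$ boundary vertices (not merely on the single seed edge), and that no interior vertex can exceed $-1$ without violating tightness in $2\sum_v b_v\geq -2c_1$. A secondary subtlety is confirming that every bounded face is actually reached by the edge-adjacency walk; this is where $2$-connectivity is essential, and I would state explicitly that the dual adjacency graph on bounded faces is connected so the induction covers all of $G$.
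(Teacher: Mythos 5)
Your overall architecture --- seed $b_p=-1$, $a_p=1$ from hypothesis (1), propagate across shared interior edges using (2), (3) and connectivity of the weak dual, then feed the resulting bounds into Equation \eqref{def.B} and Euler's formula --- is exactly the intended mirroring of Lemma \ref{lem.triangle} (the paper omits this proof precisely because it is ``similar''), and your seed and propagation steps are correct (the sentence ``I must rule out $b_p=-1$'' is just a slip; you land on $b_p=-1$, $a_p=1$ correctly). But the central inequality step, as you wrote it, fails. You bound the vertex term by $2\sum_v b_v\geq -2c_1$, i.e.\ you use $b_w\geq -1$ for \emph{every} vertex. Combining this with the face term $(2c_2-V_\infty)-2(F_G-1)$ and Euler's formula $c_1-c_2+F_G=2$ gives $B(a,b)\geq -2c_1+2c_2-V_\infty-2F_G+2=-V_\infty-2$, not $V_\infty-2$: the expression does not ``collapse'' to the claimed bound, it is off by $2V_\infty$. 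This is exactly the point where Lemma \ref{lem.triangle2} is \emph{not} a mechanical sign-flip of Lemma \ref{lem.triangle}: there the uniform bound $b_v\geq 1$ suffices, whereas here the uniform bound $b_w\geq -1$ is too weak and the vertex sum must be split. The missing ingredient is admissibility at the unbounded face: since $a_\infty=0$, every boundary vertex satisfies $b_v=a_\infty+b_v\geq 0$. Using $b_v\geq 0$ on the $V_\infty$ boundary vertices and $b_w\geq -1$ only on the $c_1-V_\infty$ interior ones gives $2\sum_v b_v\geq -2(c_1-V_\infty)$, and the same algebra then yields $B(a,b)\geq V_\infty-2$.

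You do gesture at this fact (boundary vertices ``pinned higher''), but you deploy it only in the equality analysis, and in an imprecise form: hypothesis (1) pins only the two seed vertices $v,v'$ at $0$; the remaining boundary vertices satisfy $b_w\geq 0$ by admissibility and are forced to equal $0$ only by tightness. Once the inequality is proved with the correct term-by-term bounds, the equality characterization --- which you flag as the main obstacle --- becomes the easy part: one has $B(a,b)-(V_\infty-2)=2\sum_{v\in p_\infty}b_v+2\sum_{w\notin p_\infty}(b_w+1)$, a sum of nonnegative terms, which vanishes if and only if $b_v=0$ on the boundary and $b_w=-1$ in the interior (with $a_p=1$ already forced by the propagation step). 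So the difficulty in your writeup is inverted: the care is needed in the inequality, where the boundary/interior split must enter, not in the equality case.
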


\begin{lemma}
\lbl{lem.induction}
Let $G$ be a  $2$-connected planar graph, $p_0$ be a boundary face and 
$(a,b)$ be an admissible state such that
\begin{enumerate}
\item
$a_{p_0}+b_{p_0}=0$,
\item
There exists a boundary edge  $vv'$ of $p_0$ such that $b_vb_{v'}=0$ and 
$(b_{v}-b_{p_0})(b_{v'}-b_{p_0})=0$, 
\item
Let $G_0$ be the graph obtained from $G$ by deleting the boundary edges of 
$p_0$ and let $(a_0,b_0)$ be the restriction of the admissible state $(a,b)$ 
on $G_0$.
\end{enumerate} 
Then, 
\begin{itemize}
\item[(a)] 
$(a_0,b_0)$ is an admissible state for $G_0$,
\item[(b)]
$A(a_0,b_0)=A(a,b)-\sum\limits_{e=(vv'):v,v'\in p_0\cap p_\infty}b_vb_{v'}$,
\item[(c)]
$B(a_0,b_0)=B(a,b)-2\sum\limits_{v\in V_0}b_v$, where $V_0$ is the set of 
boundary vertices of $p_0$ that do not belong to any other bounded face,
\item[(d)]
$B(a,b)\geq 2\sum\limits_{v\in V_0}b_v$,
\item[(e)]
If furthermore $B(a,b)\leq 1$ then $A(a,b)=A(a_0,b_0), B(a,b)=B(a_0,b_0)$.
\end{itemize}
\end{lemma}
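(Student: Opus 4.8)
The plan is to prove the five assertions (a)--(e) essentially in order, since each builds on the previous one, and the whole lemma is a bookkeeping exercise in comparing $A$ and $B$ before and after deleting the boundary edges of $p_0$.

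\medskip

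First, for part (a), I would check that the restriction $(a_0,b_0)$ remains admissible. Admissibility of $(a,b)$ means $a_p+b_v\geq 0$ for every incidence $(p,v)$. When we pass to $G_0$ by deleting the boundary edges of $p_0$, the face $p_0$ merges with the unbounded face to form a new (larger) unbounded face, while all the other bounded faces of $G$ survive unchanged, carrying the same values $a_p$. The only incidence constraints that could be violated are those involving vertices of $p_0$, but every such vertex $v$ either still lies on some surviving bounded face $p$ (where $a_p+b_v\geq 0$ is inherited from $G$) or, if it becomes a vertex of the new unbounded face, is subject only to the weaker unbounded-face conditions. In the reduced/unreduced convention of this section we need $b_v\geq 0$ for vertices that land on the unbounded face: this is exactly where hypothesis (2), $b_vb_{v'}=0$ together with $(b_v-b_{p_0})(b_{v'}-b_{p_0})=0$, will be used to control the signs of the $b$-values at the boundary of $p_0$, via the same argument pattern as in Lemma~\ref{lem.triangle} and Lemma~\ref{lem.triangle2}. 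So the crux of (a) is verifying that the vertices newly exposed to $p_\infty$ satisfy the unbounded-face sign condition.

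\medskip

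Parts (b) and (c) are direct computations from the defining formulas \eqref{def.A} and \eqref{def.B}, and I would handle them using the completed-square form \eqref{eq.Q2} of $A$ established in Theorem~\ref{thm.2}(a). For $A$: deleting the boundary edges of $p_0$ removes exactly the $p_0$-block from the face-sum and, because those edges now lie on the unbounded face, the terms $b_vb_{v'}$ for edges $vv'$ of $p_0$ reappear in the $p_\infty$-sum; hypotheses (1) ($a_{p_0}+b_{p_0}=0$) and (3)/(2) force the surviving $p_0$-contributions $l(p_0)(a_{p_0}+b_{p_0})^2$, the cross term, and the intra-$p_0$ edge terms $(b_v-b_{p_0})(b_{v'}-b_{p_0})$ to vanish, leaving precisely the stated correction $-\sum_{e=(vv'):v,v'\in p_0\cap p_\infty}b_vb_{v'}$. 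For $B$, from \eqref{def.B} the term $2\sum_v b_v$ loses exactly the contributions of the vertices in $V_0$ (those whose only incident bounded face was $p_0$, hence which disappear from the vertex-set of $G_0$ or cease to be summed), and the face-sum $\sum_p(l(p)-2)a_p$ loses the $(l(p_0)-2)a_{p_0}$ term, which again vanishes by $a_{p_0}=-b_{p_0}$ combined with the sign constraints; the net effect is the claimed $-2\sum_{v\in V_0}b_v$. Part (d) is then immediate: since $B(a_0,b_0)\geq 0$ (it is a sum of manifestly nonnegative linear forms by Theorem~\ref{thm.2}(b)), (c) gives $B(a,b)=B(a_0,b_0)+2\sum_{v\in V_0}b_v\geq 2\sum_{v\in V_0}b_v$.

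\medskip

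Finally, part (e) is where the delicacy lies and I expect it to be the main obstacle. Assuming $B(a,b)\leq 1$, I want to conclude that no correction occurs, i.e. $\sum_{v\in V_0}b_v=0$ and $\sum_{e\subset p_0\cap p_\infty}b_vb_{v'}=0$. The idea is that $B(a,b)\leq 1$ together with the nonnegativity of each linear form in the decomposition \eqref{eq.dist} forces nearly all of the local $b$ and $a_p+b_v$ quantities to be $0$, with at most one of them equal to $1$. In particular the boundary vertices of $p_0$ lying in $V_0$ must all have $b_v=0$ (any positive $b_v$ would, through Corollary~\ref{cor.2} and \eqref{eq.dist}, either push $B$ above $1$ or be incompatible with admissibility $a_{p_0}+b_v\geq 0$ given $a_{p_0}=-b_{p_0}$), so $\sum_{v\in V_0}b_v=0$ and the $B$-correction vanishes; simultaneously the edge products $b_vb_{v'}$ over $p_0\cap p_\infty$ vanish because at least one endpoint of each such edge has $b=0$ by hypothesis (2) propagated along the boundary. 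The careful point is to rule out the mixed case where some $b_v$ is negative while another is positive summing to something small but with a nonzero product; here I would lean on admissibility ($a_{p_0}+b_v\geq 0$ with $a_{p_0}=-b_{p_0}$ gives $b_v\geq b_{p_0}$) and on hypothesis (2) to pin down $b_{p_0}$ and hence force the boundary $b$-values into $\{0\}$ (or a single controlled exception that still yields a zero product). Getting this sign analysis airtight under the single global constraint $B\leq 1$ is the part that will require the most care.
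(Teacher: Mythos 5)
Your route for (a)--(d) and for the $B$-half of (e) is the same as the paper's, but two steps need repair before it is a proof. First, the fact you only gesture at is the linchpin and should be carried out explicitly (it is one line, and it is how the paper starts): since $vv'$ is an edge of $p_\infty$, admissibility gives $b_v,b_{v'}\geq 0$; hypothesis (2) gives $b_vb_{v'}=0$ and $b_{p_0}\in\{b_v,b_{v'}\}$, and since $b_{p_0}\leq \min(b_v,b_{v'})=0$ while $b_{p_0}$ equals one of the two nonnegative numbers $b_v,b_{v'}$, we get $b_{p_0}=0$. Hence $b_w\geq 0$ for every $w\in p_0$ (which is all that (a) requires) and, by (1), $a_{p_0}=0$, which is what collapses the computations in (b) and (c). Second, your mechanism for (b) is incorrect as described: hypotheses (1)--(2) do \emph{not} force the intra-$p_0$ terms $(b_v-b_{p_0})(b_{v'}-b_{p_0})$ to vanish --- (2) controls a single edge. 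What happens instead is that $b_{p_0}=0$ turns each such term into $b_vb_{v'}$; the terms on interior edges of $p_0$ reappear verbatim in the unbounded-face sum of \eqref{eq.Q2} for $G_0$, and the terms on the deleted boundary edges are what survive as the correction (each such edge in fact contributes twice, once from the $p_0$-block and once from the old $p_\infty$-sum, so the true correction is $2\sum b_vb_{v'}$; the missing factor $2$ in the lemma's statement is harmless because only the vanishing of the correction is ever used). The paper avoids this bookkeeping entirely by subtracting the raw formulas \eqref{def.A} and \eqref{def.B} for $G$ and $G_0$, using only $a_{p_0}=0$. Your (c), (d), and the integrality argument $2\sum_{v\in V_0}b_v\leq B(a,b)\leq 1\Rightarrow \sum_{v\in V_0}b_v=0$ are exactly the paper's proof; indeed that remark is the \emph{entire} proof of (e) given in the paper.

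The genuine gap is the $A$-equality in (e), and your proposed fix would fail. You want every deleted edge to have an endpoint with $b=0$ ``by hypothesis (2) propagated along the boundary,'' but (2) constrains one edge and there is no propagation mechanism: take $G$ to be a hexagon $v_1\cdots v_6$ with chords $v_1v_3$ and $v_4v_6$ and $p_0=v_1v_3v_4v_6$; then $V_0=\emptyset$ and both boundary edges $v_3v_4$, $v_6v_1$ of $p_0$ have all endpoints on other bounded faces, so neither hypothesis (2) nor the vanishing of $\sum_{v\in V_0}b_v$ says anything about their products. What one actually has to exclude is an edge of $p_0\cap p_\infty$ with $b_v=b_{v'}=1$ (Corollary \ref{cor.2} with $a_{p_0}=0$ gives $b_w\leq B\leq 1$ on $p_0$, so this is the only bad case), and that exclusion is genuinely global: it requires showing that two adjacent boundary vertices with $b$-value $1$ force $B\geq 2$, a counting argument in the spirit of Lemma \ref{lem.triangle} using the decomposition \eqref{eq.dist}, not a local sign analysis. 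You were right to flag this as the hard point, but you should also know that the paper does not supply this argument either: its proof of (e) establishes only the $B$-equality, and in the applications in Section \ref{sec.thm1} the lemma is invoked for faces all of whose boundary-edge products already vanish by the case hypotheses in force, so the $A$-equality is trivial there. In short: on (a)--(d) you match the paper modulo the two fixes above, and on (e) you have correctly isolated --- without resolving --- a point that the paper itself glosses over.
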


\begin{proof}
From (2) we have either $b_v=0$ or $b_{v'}=0$ and  
it follows from $(b_{v}-b_{p_0})(b_{v'}-b_{p_0})=0$ that $b_{p_0}=0$. This means
that  we have $b_v\geq 0$ for all $v\in p_0$. This 
implies (a). Furthermore (1) implies that $a_{p_0}=0$ and thus 
$A(a,b)-A(a_0,b_0)=l(p_0)a_{p_0}^2+2a_{p_0}(\sum\limits_{v\in p_0}b_v)
+\sum\limits_{e=(vv'):v,v'\in p_0\cap p_\infty}b_vb_{v'}=
\sum\limits_{e=(vv'):v,v'\in p_0\cap p_\infty}b_vb_{v'}$ and 
$B(a,b)-B(a_0,b_0)=a_{p_0}+2\sum\limits_{v\in V_0}b_v=2\sum\limits_{v\in V_0}b_v$. 
This proves (b) and (c). (d) follows from (c) since we have 
$0\leq B(a_0,b_0)=B(a,b)-2\sum\limits_{v\in V_0}b_v$, (e) is a consequence of (b), (c)
and (d) since $1\geq B(a,b)\geq 2\sum\limits_{v\in V_0}b_v$ implies 
that $\sum\limits_{v\in V_0}b_v=0$.
$$
\psdraw{coefq}{3in}
$$
\end{proof}

\subsection{The coefficient of $q$ in $\Phi_G(q)$}
\lbl{coef.q}

We need to find the admissible states $(a,b)$ such that 
$\frac{1}{2}(A(a,b)+B(a,b))=1$. 
Parts (a) and (b) of Theorem \ref{thm.2} imply that 
$A(a,b), B(a,b)\in \BN$. Thus, if $\frac{1}{2}(A(a,b)+B(a,b))=1$ then 
we have the following cases:
$$
\begin{array}{|c|c|c|c|}
\hline
A(a,b) & 2 & 1 & 0 \\ \hline
B(a,b) & 0 & 1 & 2 \\ \hline
\end{array}
$$

{\bf Case 1:} $(A(a,b), B(a,b))=(2,0)$. Since $l(p)\geq 3$, we should 
have $a_p+b_p=0$ for all faces $p$. This implies that 
$a_p+b_v=a_p+b_p+b_v-b_p=b_v-b_p$ and it follows from Corollary \ref{cor.2} 
that $0=B(a,b)\geq a_p+b_v = b_v-b_p$. This means $b_v-b_p=a_p+b_v=0$ for all 
faces $p$ and vertices $v$ of $p$, so Equation \eqref{eq.Q2} is equivalent to
\begin{equation}
\lbl{eq.10}
\sum\limits_{vv'\in p_\infty}b_vb_{v'}=2 \,.
\end{equation}
If $vv'$ is an edge of $G$ and $p$ is a face that contains $vv'$ then we 
have $a_p+b_v=0=a_p+b_{v'}$ and therefore $b_v=b_{v'}$. So by Equation 
\eqref{eq.10} there exists a boundary edge $vv'$ such that $b_{v}=b_{v'}=1$. 
Lemma \ref{lem.triangle} implies that $B(a,b)\geq 2+V_\infty>0$ which is 
impossible. Therefore there are no admissible states $(a,b)$ that 
satisfy $(A(a,b), B(a,b))=(2,0)$.\\

{\bf Case 2:} $(A(a,b), B(a,b))=(1,1)$.  As above we have that $a_p+b_p=0$ 
for all faces $p$. Since $A(a,b)=1$, there is either a bounded face $p_1$ 
with an edge $v_1v_1'$ such that $(b_{v_1}-b_{p_1})(b_{v_1'}-b_{p_1})=1$  or a 
boundary edge $v_2v_2'$ such that $b_{v_2}b_{v_2'}=1$  and all other terms in 
Equation \eqref{eq.Q2} are equal to zero. Let $p_2$ be the bounded face 
that contains $v_2v_2'$ and let $p\neq p_1,p_2$ be a bounded face. Let 
$G'$ be the graph obtained from $G$ by deleting the boundary edges of 
$p$ and $(a',b')$ be the restriction of $(a,b)$ on $G'$. By part (e)
of Lemma \ref{lem.induction}, we have $A(a',b')=A(a,b)$ and $B(a',b')=B(a,b)$. 
Continue this process until either $G=p_1$ or $G=p_2$. If $G=p_2$ then  
$b_{v_2}b_{v_2'}=1$ and therefore $B(a,b)\geq 2(b_{v_2}+b_{v_2}')= 4$ which is 
impossible. If $G=p_1$ then $v_1,v_2$ are now boundary vertices and so $b_{v_1}b_{v_1'}=0$ and we can assume that $b_{v_1}=0$. But this implies that $-b_{p_1}(b_{v_1'}-b_{p_1})=1$ hence $b_{p_1}=-1$. This is impossible since $b_{p_1}$ is a boundary vertex. Thus there are no 
admissible states $(a,b)$ that satisfy $(A(a,b), B(a,b))=(1,1)$. 

{\bf Case 3:} $(A(a,b), B(a,b))=(0,2)$. Since $A(a,b)=0$ we should have 
\begin{itemize}
\item $a_p+b_p=0$ for all faces $p$,
\item $b_vb_{v'}=0$ for all boundary edges $vv'$,
\item $(b_v-b_p)(b_{v'}-b_p)=0$ for all bounded faces $p$ and and edges 
$vv'\in p$.
\end{itemize} 
Let $p$ be a bounded face of $G$. Let $G'$ be the graph obtained from $G$ 
by deleting the boundary edges of $G$ and $(a',b')$ be the restriction of 
$(a,b)$ on $G'$. By part (e) of Lemma \ref{lem.induction},
we have $A(a',b')=A(a,b)$ and
$B(a',b')=B(a,b)-2n_p$ where $n_p\in \mathbb{N}$. Since $B(a,b)=2$, 
$n_p\leq 1$ and $n_p=1$ if and only if there exists exactly one boundary 
vertex $v\in p$ such that $b_v=1$ and $b_v'=0$ for any other boundary vertex 
$v'$ of $p$.
Continuing this process it is easy to show that an admissible state 
$(a,b)$ such that $(A(a,b), B(a,b))=(0,2)$ must satisfy the following: 
\begin{itemize}
\item $a_p=0$ for all $p$,
\item $b_v=1$ for a vertex $v$ and $b_{v'}=0$ for any other  vertex $v'$ of $G$. 
\end{itemize}
 The contribution of this state to $\Phi_G(q)$ is 
$\frac{q}{(1-q)^{\text{deg}(v)}}=q +O(q^2)$.

Thus from Theorem \ref{thm.phi0} and cases 1-3 we have
\begin{align*}
\langle \Phi^{\text{TQFT}}_G(q) \rangle_1 &=
\Big\langle (q)_\infty^{c_2}\left(
1+ \sum\limits_{v} q + O(q^2) \right)\Big\rangle_1\\
& = c_1-c_2 \,.
\end{align*}
Therefore, 
$$\langle \Phi_G(q) \rangle_1=\langle (1-q)\Phi^{\text{TQFT}}_G(q) 
\rangle_1=c_1-c_2-1 \,.
$$

\subsection{The coefficient of $q^2$ in $\Phi_G(q)$}
\lbl{coef.q2}

We need to find the admissible states $(a,b)$ such that 
$\frac{1}{2}(A(a,b)+B(a,b))=2$. Since $A(a,b),B(a,b)\in \mathbb{N}$ 
we have the following cases: 
$$
\begin{array}{|c|c|c|c|c|c|}
\hline
A(a,b) & 4 & 3 & 2 & 1 & 0 \\ \hline
B(a,b) & 0 & 1 & 2 & 3 & 4 \\ \hline
\end{array}
$$

{\bf Case 1}: $(A(a,b), B(a,b))=(4,0)$. If there is a face $p$ such that 
$a_{p}+b_{p} > 0$ then by Corollary \ref{cor.2} we have 
$B(a,b)\geq a_p+b_v\geq a_{p}+b_{p}>0$ where $v$ is a vertex of $p$. Therefore $a_p+b_p=0$ for all faces $p$. 
Similarly, if there exists a face $p$ and a vertex $v\in p$ such that 
$b_v-b_p>0$ then $0=B(a,b)\geq a_p+b_v= a_p+b_p+b_v-b_p\geq b_v-b_p>0$. Therefore 
$a_p+b_v=b_v-b_p=0$ for all $v\in p$. Thus $A(a,b)=4$ is equivalent to
\begin{equation}
\lbl{boundary4}
\sum\limits_{vv'\in p_\infty}b_vb_{v'}=4 \,.
\end{equation}
If $vv'$ is an edge of $G$ and $p$ is a bounded face that contains $vv'$ then we 
have $a_p+b_v=0=a_p+b_{v'}$ and therefore $b_v=b_{v'}$. So by Equation 
\eqref{eq.10} there exists a boundary edge $vv'$ such that $b_{v}=b_{v'}=1$. 
Lemma \ref{lem.triangle} implies that $B(a,b)\geq 2+V_\infty>0$ which is 
impossible. Therefore there are no admissible states $(a,b)$ that 
satisfy $(A(a,b), B(a,b))=(4,0)$.\\

{\bf Case 2}: $(A(a,b), B(a,b))=(3,1)$. If there exists a face $p_0$ such 
that $a_{p_0}+b_{p_0}>0$ then we must have $l(p_0)=3$ and
\begin{itemize}
\item $a_{p_0}+b_{p_0}=1$, $a_p+b_p=0$ for any $p\neq p_0$,
\item $b_vb_{v'}=0$ for all boundary edges $vv'$,
\item $(b_v-b_p)(b_{v'}-b_p)=0$ for all bounded faces $p$ and and edges 
$vv'\in p$.
\end{itemize} 
Let $p\neq p_0$ be a bounded face of $G$. Let $G'$ be the graph obtained 
from $G$ by deleting the boundary edges of $p$ and $(a',b')$ be the 
restriction of $(a,b)$ on $G'$. By part (e) of Lemma \ref{lem.induction}, 
we have 
$A(a',b')=A(a,b)$ and $B(a',b')=B(a,b)$. We can continue this process until 
$G=p_0$. Let $v_0,v_0',v_0''$ be the vertices of $p_0$ then $b_{v_0}b_{v_0'}=0$  
so we can assume that $b_{v_0}=0$. Since $(b_{v_0}-b_{p_0})(b_{v_0'}-b_{p_0})=0$ 
we have $b_{p_0}=0$ and hence $a_{p_0}=a_{p_0}+b_{p_0}=1$. Since 
$1=B(a,b)=a_{p_0}+2(b_{v_0}+b_{v_0'}+b_{v_0''})$ it implies that 
$b_{v_0'}=b_{v_0''}=0$. This gives us the following set of admissible states 
$(a,b)$:
\begin{itemize}
\item $a_{p}=1$ for a triangular face $p$,  $a_{p'}=0$ for $p'\neq p$,
\item  $b_v=0$ for all vertices $v$, 
\end{itemize}
The contribution of this state to $\Phi_G(q)$ is 
$(-1)^1\frac{q^2}{(1-q)^{l(p)}}=-\frac{q^2}{(1-q)^3}=-q^2+O(q^3)$.

On the other hand if $a_p+b_p=0$ for all $p$ then we have
\begin{equation}
\lbl{eq.q2.case2}
\sum\limits_{p}\sum\limits_{vv'\in p}(b_v-b_p)(b_{v'}-b_p)
+  \sum\limits_{vv'\in p_\infty}b_vb_{v'}=3 \,.
\end{equation}
There are at most three 
positive terms in the above equation. If a boundary face $p$ has a boundary edge 
$vv'$ that does not correspond to any positive term then
we have $b_vb_{v'}=(b_v-b_p)(b_{v'}-b_p)=0$ so $b_p=0$ which implies 
that $a_p=0$.
Let $G'$ be the graph obtained from $G$ by deleting the boundary edges of 
$p$ and $(a',b')$ 
be the restriction of $(a,b)$ on $G'$. By part (e) of
Lemma \ref{lem.induction}, we have $A(a',b')=A(a,b)$ and 
$B(a',b')=B(a,b)$. We can continue to do this until all boundary edges  of $G$ are 
$v_iv_i',~ i=1,2,3$. This only 
happens if these three edges together form a triangle. Let us denote the 
triangle's vertices by $v,v',v''$ and let $p,p',p''$
be the bounded faces that contain $vv',v'v'',v''v$ respectively. Note that 
since the positive terms in Equation \eqref{eq.q2.case2} correspond to 
different edges, we must have
\begin{align*}
b_vb_{v'}+(b_v-b_p)(b_{v'}-b_p) &=1\\
b_{v'}b_{v''}+(b_{v'}-b_{p'})(b_{v''}-b_{p'}) &=1\\
b_{v''}b_{v}+(b_{v''}-b_{p''})(b_{v}-b_{p''}) &=1
\end{align*}

{\bf Case 2.1}: If the positive terms are $b_vb_{v'},b_{v'}b_{v''},b_{v''}b_v$
then we must have simultaneously  $b_vb_{v'}=b_{v'}b_{v''}=b_{v''}b_v=1$ and 
$(b_w-b_{\tilde{p}})(b_{w'}-b_{\tilde{p}})=0$ for all faces $\tilde{p}$ and 
edge $ww'$. 
The former implies that $b_v=b_{v'}=b_{v''}=1$. Therefore from Lemma 
\ref{lem.triangle}
we have $B(a,b)\geq 2+3=5$ which is impossible.

{\bf Case 2.2}: If, for instance, $b_vb_{v'}=0$ then we must also have 
$(b_v-b_p)(b_{v'}-b_p)=1$. Thus
we can assume that $b_v=0$ and so $-b_p(b_{v'}-b_p)=1$. This implies that
$b_p=-1$ and $b_{v'}=0$. In particular, we have $b_{v'}b_{v''}=0$ and hence 
$(b_{v'}-b_{p'})(b_{v''}-b_{p'})=1$. 
Since $b_vb_{v''}=0$ we also have  $(b_{v''}-b_{p''})(b_{v}-b_{p''})=1$. In 
particular, this implies that 
$(b_w-b_{\tilde{p}})(b_{w'}-b_{\tilde{p}})=0$ for all faces $\tilde{p}$ and edges 
$ww'\in\tilde{p}$ not on
the boundary. Since $B(a,b)=1$ Lemma \ref{lem.triangle2} implies that we must 
have $b_w=-1$ for all $w\neq v,v',v''$ and $a_p=1$ for all $p\neq p_{\infty}$. 
$$
\includegraphics[scale=1.5]{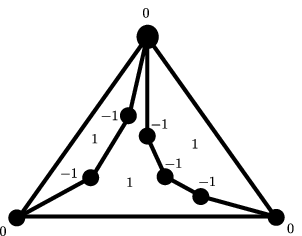}
$$
This corresponds to the following admissible state of $G$: 
\begin{itemize}
\item $a_p=1$ for all bounded faces $p$,
\item $b_v=b_{v'}=b_{v''}=0$ where $v,v',v''$ are the vertices of a 3-cycle in 
$G$,
\item $b_w=-1$ for all vertices $w$ inside the 3-circle mentioned above,
\item $b_{\tilde{w}}=0$ for any other vertex $w$.
\end{itemize}

The contribution of this state to $\Phi_G(q)$ is 
$$(-1)^1\frac{q^2}{(1-q)^{\text{deg}_\Delta(v)+\text{deg}_\Delta(v')+\text{deg}_\Delta(v'')-3}}=-q^2+O(q^3)$$
where $\text{deg}_\Delta(v)$ is the degree of $v$ in the triangle $\Delta=vv'v''$.

{\bf Case 3}: We consider the two cases $(A(a,b), B(a,b)) 
=(2,2)$ and $(A(a,b), B(a,b)) =(1,3)$ together. Since $A(a,b)\leq 2$ we should have 
$a_p+b_p=0$ for all faces $p$ and $A(a,b)=2$ is equivalent to
 $$
\sum\limits_{p}\sum\limits_{vv'\in p}(b_v-b_p)(b_{v'}-b_p)
+  \sum\limits_{vv'\in p_\infty}b_vb_{v'}=2
$$
There are at most two positive terms in the above equation. If a boundary face $p$ has a boundary edge 
$vv'$ that does not correspond to any positive term then
we have $b_vb_{v'}=(b_v-b_p)(b_{v'}-b_p)=0$ so $b_p=0$ which implies 
that $a_p=0$. By part (d)
of Lemma
\ref{lem.induction}, it follows that if $w$ is a boundary vertex of $p$ 
then 
$B(a,b)\geq 2b_w$ and since $B(a,b)\leq 3$ we have $b_w=0$ or $1$. Therefore 
by parts (b,c) of Lemma \ref{lem.induction} we can remove the boundary 
edges of $p$ to obtain a new graph $G'$ that satisfies $A(a,b)=A'(a,b)$ 
and $B(a,b)=B'(a,b)$ or  $B(a,b)=B'(a,b)+1$ where $A'(a,b),B'(a,b)$ are 
the restrictions of $A(a,b)$ and $B(a,b)$ on $G'$. By continuing this 
process until $G=\emptyset$, it is easy to see that  we must have 
$A(a,b)=0$, $B(a,b)\leq 1$ and $B(a,b)=1$ if and only if there exists a 
unique boundary vertex $w$ of $p$ such that $b_w=1$. Thus there are no 
admissible states that satisfy  $(A(a,b), B(a,b))=(2,2)$ or 
$(A(a,b), B(a,b))=(1,3)$.

{\bf Case 4}:  $(A(a,b), B(a,b))=(0,4)$. Since $A(a,b)=0$, we should have 
\begin{align}
a_p+b_p &= 0 ~\text{for all faces $p$} \lbl{eq.41} \\
(b_v-b_p)(b_{v'}-b_p)&= 0 ~\text{for all faces $p$ and edges $vv'\in p$} 
\lbl{eq.42} \\
b_vb_{v'}&= 0 ~\text{for all edges $vv'\in p$} \lbl{eq.43}
\end{align}
Let $p$ be a boundary face of $G$ and $vv'\in p$ be a boundary edge. Equations \eqref{eq.42} and \eqref{eq.43} imply that $b_p=0$ and so $a_p=0$ by Equation \eqref{eq.41}. Let $G'$ be 
the 
graph obtained from $G$ by deleting the boundary edges of $G$ and $(a',b')$ 
be the restriction of $(a,b)$ on $G'$. By part (e) of
Lemma \ref{lem.induction} we 
have $A(a',b')=A(a,b)$, $B(a',b')=B(a,b)-2n_p$ where $n_p\in \mathbb{N}$. 
Since $B(a,b)=4$ we have $n_p\leq 2$ and 
\begin{itemize}
\item $n_p=2$ if and only if there exist 
either  exactly two boundary vertices $v,w \in p$ that are not connected by 
an edge such that $b_v=b_{v'}=1$ or exactly one boundary vertex $v\in p$ such 
that $b_v=2$ and $b_{v'}=0$ for all other boundary vertices $v'\in p$
\item $n_p=1$ if and only if there exists exactly one boundary vertex 
$v\in p$ such that $b_v=1$ and $b_v'=0$ for any other boundary vertex $v'$ 
of $p$.
\end{itemize} 
Similarly,  by continuing this process it is easy to show that an admissible 
state $(a,b)$ such that $(A(a,b), B(a,b))=(0,4)$ must satisfy one the 
following.
\begin{itemize}
\item $b_v=b_{v'}=1$ for a pair of vertices that are not connected by an 
edge of $G$, $b_w=0$ for any other vertex $w$,
\item $a_p=0$ for all faces $p$.
\end{itemize}
The contribution of this state to $\Phi_G(q)$ is 
$\frac{q^2}{(1-q)^{\text{deg}(v)+\text{deg}(v')}}
=-q^2+O(q^3)$.
\begin{itemize}
\item $b_v=2$ for a vertex $v$, $b_w=0$ for any other vertex $w$,
\item $a_p=0$ for all faces $p$.
\end{itemize}
The contribution of this state to $\Phi_G(q)$ is 
$\frac{q^2}{(1-q)_2^{\text{deg}(v)}}=
-q^2+O(q^3)$.

It follows from Theorem \ref{thm.phi0}, Section \ref{coef.q} and cases 1-4 
that
\begin{align*}
\langle{\Phi^{\text{TQFT}}_G(q)}\rangle_{2} &= \langle{(q)_\infty^{c_2}(1+ \sum\limits_{v}\frac{q}{(1-q)^{\text{deg}(v)}}+(-c_3+c_1+\frac{c_1(c_1-1)}{2}-c_2))q^2}\rangle_{2}\\ \notag
& =  \langle{(q)_\infty^{c_2}(1+ q(c_1+2c_2 q)+(\frac{c_1(c_1+1)}{2}-c_2-c_3)q^2}\rangle_{2}\\  \notag
&= \langle{(1-c_2 q+\frac{c_2(c_2-3)}{2}q^2)(1+ c_1 q+(\frac{c_1(c_1+1)}{2}+c_2-c_3)q^2}\rangle_{2}\\  \notag
&= \frac{(c_1 -c_2)^2}{2} -c_3+\frac{c_1-c_2}{2} \,.
\end{align*}
Therefore, 
\begin{align*}
\langle \Phi_G(q) \rangle_2 &=\langle (1-q)\Phi^{\text{TQFT}}_G(q) 
\rangle_2\\
&=\Big\langle (1-q)(1+(c_1-c_2)q+(\frac{(c_1 -c_2)^2}{2} -c_3 + \frac{c_1-c_2}{2})q^2) \Big\rangle_2\\
&=\frac{1}{2}((c_1 -c_2)^2-2c_3-c_1+c_2) \,.
\end{align*} 
This completes the proof of Theorem \ref{thm.1}.
\qed

\subsection{Proof of Lemma \ref{lem.sum}}
\lbl{sub.lem.sum}

Fix a planar graph $G$ and consider $G \cdot P_r$ where
$P_r$ is a polygon with $r$ sides and vertices $b_1,\dots, b_r$
as in the following figure 
$$
\psdraw{Pbb}{2in}
$$
Consider the corresponding portion $S(b_{r-1},b_r)$
of the formula of $\Phi_{G \cdot P_r}(q)$
\begin{equation}
\lbl{eq.Sbb}
S(b_{r-1},b_r)=\sum_{a,b_1,\dots,b_{r-2}} (-1)^{r a} 
\frac{q^{\frac{r}{2} a^2 + a(b_1 + \dots b_r) + \sum_{i=1}^{r-2} b_i b_{i+1}
+ b_1 b_r + \sum_{i=1}^{r-2} b_i + \frac{r-2}{2} a}}{
(q)_{b_1} (q)_{b_2} \dots (q)_{b_{r-2}} (q)_{b_1+a} (q)_{b_2+a} \dots
(q)_{b_r+a}}
\end{equation}
for fixed $b_{r-1},b_r \geq 0$. Armond-Dasbach \cite[Thm3.7]{AD} and
Andrews \cite{Andrews} prove that 
$$
S(b_{r-1},0)= (q)_\infty^{-r+1} h_r(q) 
$$
for all $b_{r-1} \geq 0$. Summing over the remaining variables in the
formula for $\Phi_{G \cdot P_r}(q)$ concludes the proof of the Lemma.
\qed


\section{The computation of $\Phi_G(q)$}
\lbl{sec.compute}

\subsection{The computation of $\Phi_{L8a7}(q)$ in detail}
\lbl{sub.L8a7}

In this section we explain in detail the computation of 
$\Phi_{L8a7}(q)$. Consider the planar graph of the alternating link $L8a7$
shown in Figure \ref{f.L8a7}, with the marking of its vertices by
$b_i$ for $i=1,\dots,6$ and its bounded faces by $a_j$ for $j=1,2,3$.

\begin{figure}[htpb]
$$\psdraw{L8a7}{1.2in}$$
\caption{The planar graph of the link $L8a7$.}
\lbl{f.L8a7}
\end{figure}
           
Consider the minimum values of the $b$-variables at each bounded
face: 
\begin{align*}
\bar{b}_1 & =\min\{b_1,b_4,b_5,b_6\} \\ 
\bar{b}_2 & =\min\{b_3,b_4,b_5,b_6\} \\
\bar{b}_3 & =\min\{b_1,b_2,b_3,b_6\} \,. 
\end{align*}

We have
\begin{align}
\lbl{dist.Q2}
\frac{1}{2}A(a,b)& = 2(a_1+\bar{b}_1)^2+(a_1+\bar{b}_1)(b_1+b_4+b_5+b_6-4\bar{b}_1)\nonumber \\
& +2(a_2+\bar{b}_2)^2 +(a_1+\bar{b}_2)(b_3+b_4+b_5+b_6-4\bar{b}_2) \nonumber \\
& +2(a_3+\bar{b}_3)^2 +(a_3+\bar{b}_3)(b_1+b_2+b_3+b_6-4\bar{b}_3) \nonumber \\
& +\frac{1}{2}(b_1-\bar{b}_1)(b_6-\bar{b}_1)+(b_6-\bar{b}_1)(b_5-\bar{b}_1)+(b_5-\bar{b}_1)(b_4-\bar{b}_1)+(b_4-\bar{b}_1)(b_1-\bar{b}_1) \nonumber \\
& +\frac{1}{2}(b_3-\bar{b}_2)(b_4-\bar{b}_2)+(b_4-\bar{b}_2)(b_5-\bar{b}_2)+(b_5-\bar{b}_2)(b_6-\bar{b}_2)+(b_6-\bar{b}_2)(b_3-\bar{b}_2) \nonumber \\
& + \frac{1}{2}(b_1-\bar{b}_3)(b_2-\bar{b}_3)+(b_2-\bar{b}_3)(b_3-\bar{b}_3)+(b_3-\bar{b}_3)(b_6-\bar{b}_3)+(b_6-\bar{b}_3)(b_1-\bar{b}_3) \nonumber \\
& + \frac{1}{2}(b_1b_2+b_2b_3+b_3b_4+b_4b_1)\nonumber \\
& = C(a_1,a_2,a_3,b_1,b_2,b_3,b_4,b_5,b_6)+D(b_1,b_2,b_3,b_4,b_5,b_6)
\end{align}
and
\begin{align}
\notag\frac{1}{2}B(a,b)&= a_1+a_2+a_3+b_1+b_2+b_3+b_4+b_5+b_6\\
\lbl{dist.L}
&= \frac{a_1+b_1}{2}+\frac{a_1+b_5}{2}+\frac{a_2+b_5}{2}+\frac{a_2+b_6}{2}+\frac{a_3+b_1}{2}+\frac{a_3+b_6}{2}+b_2+b_3+b_4 \,. 
\end{align}
If $\frac{1}{2}(A(a,b)+B(a,b))\leq N$ then $\frac{1}{2}B(a,b)\leq N$, so 
\begin{align}
 0 \leq & b_2\leq N \lbl{b2}\\
 0 \leq & b_3\leq N-b_2 \lbl{b3}\\
 0 \leq & b_4\leq N-b_2-b_3 \,. \lbl{b4}
\end{align}
Let us set 
\begin{align}
\lbl{b1}
b_1 & =0 \,.
\end{align}
Equation \eqref{dist.L} implies that 
$0\leq \frac{a_1+b_1}{2}\leq N-b_2-b_3-b_4$ which implies that 
$0\leq a_1\leq 2(N-b_2-b_3-b_4)$. It follows from 
$0\leq \frac{a_1+b_5}{2}\leq N$ that
\begin{equation}
\lbl{b5}
-2(N-b_2-b_3-b_4)\leq b_5\leq 2(N-b_2-b_3-b_4) \,.
\end{equation}
Since $0\leq \frac{a_2+b_5}{2}\leq N-b_2-b_3-b_4$ from \eqref{b5} we have $-2(N-b_2-b_3-b_4)\leq a_2\leq 4(N-b_2-b_3-b_4)$. Therefore, since $0\leq a_2\leq \frac{a_2+b_6}{2}$ we have
\begin{equation}
\lbl{b6}
-4(N-b_2-b_3-b_4)\leq b_6\leq 4(N-b_2-b_3-b_4) \,.
\end{equation}
Equations \eqref{b2}-\eqref{b6} in particular bound $b_2, b_3, b_4, b_5$ and 
$b_6$ from above and from below by linear forms in $N$. But even better,
Equations \eqref{b2}-\eqref{b6} allow for an iterated summation 
for the $b_i$ variables which improves the computation of the 
$\Phi_{L8a7}(q)$ series.

To bound $a_1,a_2,a_3$ we will use the auxiliary function
$$
u(c,d)=\Big{[}\frac{-c+\sqrt{c^2+2d}}{2}\Big{]}
$$
where the integer part $[x]$ of a real number $x$ is the biggest 
integer less than or equal to $x$. The argument of $u(c,d)$
inside the integer part is one of the solutions to the equation
$2x^2+cx-d=0$. Let
\begin{align*}
\tilde{b}_1 &= b_1+b_4+b_5+b_6-4\bar{b}_1 \\
\tilde{b}_2 &= b_3+b_4+b_5+b_6-4\bar{b}_2 \\
\tilde{b}_3 &= b_1+b_2+b_3+b_6-4\bar{b}_3 \\
\tilde{D} &= D(b_1,b_2,b_3,b_4,b_5,b_6) + b_2 + b_3 + b_4
\end{align*}
Since 
$$
2(a_1+\bar{b}_1)^2+(a_1+\bar{b}_1)\tilde{b}_1\leq 
N-\tilde{D}
$$
we have
\begin{equation}
\lbl{a1}
-\bar{b}_1\leq a_1 \leq -\bar{b}_1+u(\tilde{b}_1,N-\tilde{D})
\end{equation}
where the left inequality follows from the fact that $a_1\geq -b_i, i=1,4,5,6$.
Similarly we have
\begin{equation}
 \lbl{a2}
-\bar{b}_2  \leq  a_2  \leq -\bar{b}_2+u(\tilde{b}_1,N-\tilde{D}-2(a_1+\bar{b}_1)^2-(a_1+\bar{b}_1)\tilde{b}_1)
\end{equation}
and
\begin{equation}
 \lbl{a3}
-\bar{b}_3  \leq  a_3  \leq -\bar{b}_3 +u(\tilde{b}_1,N-\tilde{D}-2(a_1+\bar{b}_1)^2-(a_1+\bar{b}_1)\tilde{b}_1-2(a_2+\bar{b}_2)^2-(a_2+\bar{b}_2)\tilde{b}_2)
\end{equation}
Note that Equations \eqref{a1}-\eqref{a3} allow for an iterated summation
in the $a_i$ variables, and in particular imply that the span of the 
$a_i$ variables is bounded by a linear form of $\sqrt{N}$.

It follows that 
\begin{align*}
\Phi_{L8a7}(q) + O(q)^{N+1} &=
(q)_\infty^{8}\sum\limits_{(a,b)} 
\frac{q^{\frac{1}{2}(A(a,b)+B(a,b))}}{(q)_{a_1+b_1}(q)_{a_1+b_4}(q)_{a_1+b_5}(q)_{a_1+b_6}(q)_{a_2+b_3}(q)_{a_2+b_4}(q)_{a_2+b_5}
(q)_{a_2+b_6}} \\
& \cdot \frac{1}{
(q)_{a_3+b_1}(q)_{a_3+b_2}(q)_{a_3+b_3}(q)_{a_3+b_6}(q)_{b_1} (q)_{b_2}
(q)_{b_3} (q)_{b_4}}  + O(q)^{N+1} 
\end{align*}
\noindent
where $(a,b)=(a_1,a_2,a_3,b_1,b_2,b_3,b_4,b_5,b_6)$ 
satisfy the inequalities \eqref{b2}-\eqref{b6} and \eqref{a1}-\eqref{a3}. 
We give the first 21 terms of this series in the Table \ref{f.Phi.values}.

\subsection{The computation of $\Phi_G(q)$ by iterated summation}
\lbl{sub.comment}

Our method of computation requires not only the planar graph with its
vertices and faces (which is relatively easy to automate), but also 
the inequalities for the $b_i$ and $a_j$ variables which lead to an iterated
summation formula for $\Phi_G(q)$. Although Theorem \ref{thm.2} implies the 
existence of an iterated summation formula for every planar graph, we did not
implement this algorithm in general.

Instead, for each of the 11 
graphs that appear in Figures \ref{f.graphs67} and \ref{f.graphs8},
we computed the corresponding inequalities for the iterated summation by hand. 
These inequalities are too long to present them here, but we have them 
available. A consistency check of our computation is obtained by Equation
\eqref{eq.Phiconv}, where the shifted colored Jones polynomial of an
alternating link is available from \cite{B-N} for several values. Our data
matches those values.

\subsection*{Acknowledgment}
The first author wishes to thank Don Zagier for a generous sharing of his
time and his ideas and S. Zwegers for enlightening conversations. The second
author wishes to thank Chun-Hung Liu for conversations on combinatorics of
plannar graphs. The results of this project were presented by the first 
author in the Arbeitstagung in Bonn 2011, in the Spring School in Quantum
Geometry in Diablerets 2011, in the Clay Research Conference in Oxford 2012
and the Low dimensional Topology and Number Theory, Oberwolfach 2012.
We wish to thank the organizers for their invitation and hospitality.


\appendix

\section{Tables}
\lbl{sec.table}


In this section we give various tables of graphs, and their corresponding
alternating knots (following Rolfsen's notation \cite{Rf}) and links
(following Thistlethwaite's notation \cite{B-N}) and several terms of
$\Phi_G(q)$. In view of an expected positive answer to Question \ref{que.1},
we will list {\em irreducible} graphs, i.e., simple planar 2-connected graphs
which are not of the form $G_1 \cdot G_2$ (for the operation $\cdot$
defined in Section \ref{sub.properties}). 

\begin{itemize}
\item
The first table gives number of alternating links with at most 10 crossings and 
the number of irreducible graphs with at most 10 edges
\begin{equation}
\lbl{tab.edge}
\begin{array}{|c|c|c|c|c|c|c|c|c|} \hline 
\text{crossings}=\text{edges} & 3 & 4 & 5 & 6 & 7 & 8 & 9 & 10 \\ \hline
\text{alternating links} & 1 & 2 & 3 & 8 & 14 & 39 & 96 & 297 \\ \hline
\text{irreducible graphs} & 1 & 1 & 1 & 3 & 3 & 8 & 17 & 41 \\ \hline
\end{array}
\end{equation}
To list planar graphs, observe that they are {\em sparse}: if $G$ is a planar
graph which is not a tree, with $V$ vertices and $E$ edges then
$$
V \leq E \leq 3V-6 \,.
$$
\item
The next table gives the number of planar 2-connected irreducible graphs 
with at most 9 vertices 
\begin{equation}
\lbl{tab.vertex}
\begin{array}{|c|c|c|c|c|c|c|c|} \hline 
\text{vertices} & 3 & 4 & 5 & 6 & 7 & 8 & 9  \\ \hline
\text{graphs} & 1 & 2 & 5 & 19 & 106 & 897 & 10160  \\ \hline
\end{array}
\end{equation}
\item
Tables \ref{f.graphs345}, \ref{f.graphs67}, \ref{f.graphs8} and 
\ref{f.graphs9} give the list of irreducible graphs with at most 9 edges. 
These tables were constructed by listing all
graphs with $n \leq 9$ vertices, selecting those which are planar, 
and further selecting those that are irreducible. Note that if
$G$ is a planar graph with $E \leq 9$ edges, $V$ vertices and $F$ faces then 
$E-V=F-2 \geq 0$ hence $V \leq E \leq 9$.
\item 
Tables \ref{f.tait.knots8} and  \ref{f.tait.links8} give the 
reduced Tait graphs of all alternating knots and links (and their mirrors)
with at most 8 crossings. Here $P_r$ is the planar polygon with
$r$ sides and $-K$ denotes the mirror of $K$. Moreover, 
the notation $G=G_1\cdot G_2\cdot G_3$ indicates that  
$\Phi_G(q)=\Phi_{G_1}(q)\Phi_{G_2}(q)\Phi_{G_3}(q)$ by Lemma \ref{lem.sum}.
\item
Table \ref{f.tait.edge8} gives the alternating knots and links with at most
8 crossings for the irreducible graphs with at most 8 edges.
\item
Table \ref{f.Phi.values} gives the first 21 terms of of $\Phi_G(q)$ for 
all irreducible graphs with at most 8 edges. Many more terms are 
available from
\begin{center}
{\tt \url{http://www.math.gatech.edu/~stavros/publications/phi0.graphs.data/}}
\end{center}
\end{itemize}

\begin{figure}[htpb]
$$
\psdraw{30}{0.7in} \qquad \psdraw{40}{0.55in} \qquad
\psdraw{50}{0.7in}  
$$
\caption{The irreducible planar graphs $G_0^3,G_0^4$ 
and $G_0^5$ with 3, 4 and 5 edges.}
\lbl{f.graphs345}
\end{figure}

\begin{figure}[htpb]
$$
\psdraw{60}{0.7in} \qquad  \psdraw{61}{0.7in} \qquad \psdraw{62}{0.7in} 
$$
$$
\psdraw{70}{0.7in} \qquad  \psdraw{71}{0.7in} \qquad \psdraw{72}{0.7in}  
$$
\caption{The irreducible planar graphs with 6 and 7 edges: $G_0^6,G_1^6,G_2^6$ on the top and $G_0^7,G_1^7,G_2^7$ on the bottom.}
\lbl{f.graphs67}
\end{figure}

\begin{figure}[htpb]
$$
\psdraw{80}{0.7in} \qquad \psdraw{81}{0.7in} \qquad \psdraw{82}{0.7in} \qquad
\psdraw{83}{0.7in} 
$$
$$ 
 \psdraw{84}{0.7in}  \qquad \psdraw{85}{0.8in} \qquad \psdraw{86}{0.7in} \qquad
\psdraw{87}{0.7in} 
$$
\caption{The irreducible planar graphs with 8 edges: $G_0^8,\dots,G_3^8$ on the top (from left to right) and $G_4^8,\dots,G_7^8$ on the bottom.}
\lbl{f.graphs8}
\end{figure}

\begin{figure}[htpb]
$$
 \psdraw{90}{0.7in} \quad
\psdraw{91}{0.7in} \quad 
\psdraw{93}{0.7in} \quad \psdraw{94}{0.7in} \quad 
\psdraw{95}{0.7in} \quad \psdraw{96}{0.8in}  
$$
$$ 
\psdraw{98}{0.7in}  \quad
  \psdraw{99}{0.7in}  \quad 
\psdraw{910}{0.7in} \quad \psdraw{911}{0.8in} \quad
 \psdraw{912}{0.7in} \quad \psdraw{913}{0.7in} 
$$
$$
 \psdraw{918}{0.7in} \quad
 \psdraw{919}{0.7in} \quad \psdraw{914}{0.7in} \quad \psdraw{915}{0.7in} \quad 
\psdraw{917}{0.7in} 
$$
\caption{The irreducible planar graphs with 9 edges: $G_0^9,\dots,G_5^9$ on the top, $G_6^9,\dots,G_{11}^9$ on the middle and $G_{12}^{9},\dots,G_{16}^9$ on the bottom.}
\lbl{f.graphs9}
\end{figure}

\begin{figure}[htpb]
{\small
$$
\begin{array}{|c|c|c||c|c|c||c|c|c||c|c|c|} \hline
K &  G & -G & K & G & -G & K & G & -G & K & G & -G \\ \hline
0_1  & P_2  &  P_2 & 7_2  &   P_6 &  P_3 & 8_4 & P_3 & P_4\ccdot P_5 & 8_{13} & P_3\ccdot P_3\ccdot P_4 & P_3\ccdot P_3 \\
3_1 &  P_3  &  P_2 & 7_3  &  P_5 &  P_4  & 8_5 & G^8_7 & P_3 & 8_{14} & P_3\ccdot P_4 & P_3\ccdot P_3 \ccdot P_3 \\ 
4_1  &  P_3  & P_3 & 7_4  &   P_4\ccdot P_4 &  P_3 & 8_6 & P_3\ccdot P_4 & P_5 & 8_{15} & P_3\ccdot P_3\ccdot P_3 & G^6_2 \\
5_1  &  P_5 &  P_2 & 7_5  &  P_3\ccdot P_4 & P_4 & 8_7 & P_3\ccdot P_5 & P_4 & 8_{16} & G^8_4 & G^6_1 \\
5_2  &  P_4  &  P_3 & 7_6  &  P_3\ccdot P_4 &  P_3\ccdot P_3 & 8_8 & P_3\ccdot P_5 & P_3\ccdot P_3 & 8_{17} & G^7_1 & G^7_1 \\ 
6_1  &   P_5 & P_3  & 7_7 & P_3\ccdot P_3\ccdot P_3  & P_3\ccdot P_3  & 8_9 & P_3\ccdot P_4 & P_3\ccdot P_4 & 8_{18} & G^8_1 & G^8_1 \\
6_2  &  P_3\ccdot P_4 & P_3  & 8_1  & P_7  & P_3 & 8_{10} & G^7_2 & P_3\ccdot P_3 & & & \\
6_3  &  P_3\ccdot P_3 & P_3\ccdot P_3  & 8_2  & P_3\ccdot P_6  & P_3 & 8_{11} & P_3\ccdot P_4 & P_3\ccdot P_4 & & & \\
7_1  &  P_7 &   P_2  & 8_3  & P_5  & P_5 & 8_{12} & P_3\ccdot P_4 & P_3\ccdot P_4 & & & \\ \hline
\end{array}
$$
}
\caption{The reduced Tait graphs of the alternating knots with at most 
8 crossings}
\lbl{f.tait.knots8}
\end{figure}

\begin{figure}[htpb]
{\small
$$
\begin{array}{|c|c|c||c|c|c||c|c|c||c|c|c|} \hline
L &  G & -G & L & G & -G & L & G & -G & L & G & -G \\ \hline
2a1  & P_2  &  P_2 & 7a2  &   P_3\ccdot P_3 &  G^6_2 & 8a4 & P_3\ccdot P_4 & P_3\ccdot P_3\ccdot P_3 & 8a13 & P_4\ccdot P_4 & P_4 \\
4a1 &  P_4  &  P_2 & 7a3  &  G^7_2 &  P_3  & 8a5 & P_4 & P_3\ccdot P_3\ccdot P_4 & 8a14 & P_8 & P_2 \\ 
5a1  &  P_3\ccdot P_3  & P_3 & 7a4  &   P_5 &  P_3\ccdot P_3 & 8a6 & P_6 & P_3\ccdot P_3 & 8a15 & P_5 & P_3\ccdot P_3\ccdot P_3 \\
6a1  &  P_4 &  P_3\ccdot P_3 & 7a5  &  P_3\ccdot P_4 & P_3\ccdot P_3 & 8a7 & G^8_2 & G^6_1 & 8a16 & G^8_3 & G^6_1 \\
6a2  &  P_4  &  P_4 & 7a6  &  P_3\ccdot P_5 &  P_3 & 8a8 & P_3\ccdot P_4\ccdot P_3 & P_3\ccdot P_3 & 8a17 &  P_3\ccdot P_4 & G^6_2 \\ 
6a3  &   P_6 & P_2  & 7a7 & P_4  & G^6_2  & 8a9 & P_3\ccdot P_3\ccdot P_3 & P_3\ccdot P_3\ccdot P_3 & 8a18 & G^8_6 & P_3 \\
6a4  &  G^6_1 & G^6_1  & 8a1  & G^7_1  & P_3\ccdot G^6_1 & 8a10 & P_3\ccdot P_4 & P_3\ccdot P_3 & 8a19 & G^7_1 & G^7_1 \\
6a5  &  P_3 & G^6_2  & 8a2  & P_3\ccdot P_3  & P_3\ccdot G^6_2 & 8a11 & P_3\ccdot P_5 & P_4 & 8a20 & G^6_2 & G^6_2 \\
7a1  &  G^7_1 & G^6_1 & 8a3  & G^7_2  & P_3\ccdot P_3 & 8a12 & P_6 & P_4 & 8a21 & P_4 & G^8_5 \\ \hline
\end{array}
$$
}
\caption{The reduced Tait graphs of the alternating links with at most 
8 crossings}
\lbl{f.tait.links8}
\end{figure}

\begin{figure}[htpb]
{\small
$$
\begin{array}{|l|l|} \hline
G^6_1 & L6a4 \quad -\!\!L6a4 \quad -\!\!L7a1 \quad -\!\!L8a7 
\quad -\!\!8_{16} \quad -\!\!L8a16 
\\ \hline 
G^6_2 & -\!\!L6a5 \quad -L7a2 \quad -L7a7 \quad -L8a17 \quad -8_{15}  \quad L8a20 
\quad -\!\!L8a20 
\\ \hline
G^7_1 & L7a1  \quad L8a1 \quad 8_{17} \quad -8_{17} \quad L8a19 \quad -L8a19
\\ \hline
G^7_2 & 8_{10}  \quad L7a3 \quad L8a3
\\ \hline
G^8_1 & 8_{18} \quad -\!\!8_{18} 
\\ \hline
G^8_2 & L8a7 
\\ \hline 
G^8_3 & L8a16 
\\ \hline
G^8_4 & 8_{16} 
\\ \hline 
G^8_5 &  -\!\!L8a21 
\\ \hline
G^8_6 & L8a18 
\\ \hline
G^8_7 & 8_5 
\\ \hline
\end{array}
$$
}
\caption{The irreducible planar graphs with at most 8 edges and the
corresponding alternating links}
\lbl{f.tait.edge8}
\end{figure}


\begin{figure}[htpb]
{\small
$$
\begin{array}{|l|l|} \hline
G & \Phi_G(q) +O(q)^{21} \\ \hline\hline
G^6_1 
& 1 - 3 q - q^2 + 5 q^3 + 3 q^4 + 3 q^5 - 7 q^6 - 5 q^7 - 8 q^8 - 6 q^9 + 
 6 q^{10} 
\\ & 
+ 7 q^{11} + 12 q^{12} + 15 q^{13} + 16 q^{14} - 3 q^{15} - q^{16} 
- 15 q^{17} -  21 q^{18} - 31 q^{19} - 30 q^{20}
\\ \hline
G^6_2 & 1 - 2q + q^2 + 3q^3 - 2q^4 - 2q^5 - 3q^6 + 3q^7 + 4q^8 + q^9 + 
 3q^{10} 
\\ &
- 6q^{11} - 5q^{12} - 3q^{13} + q^{15} + 7q^{16} + 9q^{17} + 3q^{18} - 6q^{20}
\\ \hline\hline
G^7_1 & 1 - 3q + q^2 + 5q^3 - 3q^4 - 3q^5 - 6q^6 + 6q^7 + 8q^8 + 3q^9 + 6q^{10} 
\\ &
- 13q^{11} - 14q^{12} - 9q^{13} - q^{14} + 3q^{15} + 21q^{16} + 27q^{17} + 
 14q^{18} + 3q^{19} - 17q^{20}
\\ \hline
G^7_2 & 1 - 2q + q^2 + q^3 - 3q^4 + q^5 + q^6 + 3q^7 - 2q^8 - 4q^9 + q^{10} 
\\ &
+ 4q^{12} + 5q^{13} - 2q^{14} - 5q^{15} - 4q^{16} - 2q^{17} - 2q^{18} 
+ 5q^{19} +  8q^{20}
\\ \hline\hline
G^8_1 & 1 - 4q + 2q^2 + 9q^3 - 5q^4 - 8q^5 - 14q^6 + 10q^7 + 21q^8 + 14q^9 + 19q^{10} 
\\ &
- 29q^{11} - 42q^{12} - 42q^{13} - 20q^{14} + 3q^{15} + 64q^{16} + 
 104q^{17} + 88q^{18} + 55q^{19} - 25q^{20} 
 \\ \hline
G^8_2 & 1 - 3 q + 3 q^2 + 4 q^3 - 8 q^4 - 2 q^5 + 2 q^6 + 12 q^7 + 3 q^8 
- 15 q^9 -  4 q^{10} 
\\ & - 14 q^{11} + 10 q^{12} + 25 q^{13} + 15 q^{14} - 18 q^{16} - 22 q^{17} 
- 39 q^{18} - 12 q^{19} + 19 q^{20}
\\ \hline
G^8_3 & 1 - 3q + q^2 + 3q^3 - 3q^4 + 3q^5 + 4q^7 - 6q^8 - 10q^9 + q^{10} 
\\ & 
- q^{11} + 9q^{12} + 13q^{13} + 3q^{14} - 9q^{15} - 3q^{16} - 6q^{17} 
- 4q^{18} +  5q^{19} + 13q^{20} 
\\ \hline
G^8_4 & 1 - 3 q + 2 q^2 + 3 q^3 - 6 q^4 + q^5 + 2 q^6 + 8 q^7 - 3 q^8 
- 13 q^9 
\\ & 
- 3 q^{11} + 13 q^{12} + 19 q^{13} + q^{14} - 15 q^{15} - 20 q^{16} 
- 16 q^{17} - 13 q^{18} +  15 q^{19} + 37 q^{20}
\\ \hline
G^8_5 & 1 - 3 q + 3 q^2 + 5 q^3 - 8 q^4 - 5 q^5 - q^6 + 15 q^7 + 12 q^8 
- 8 q^9 -  7 q^{10}
\\ & - 31 q^{11} - 11 q^{12} + 14 q^{13} + 30 q^{14} + 35 q^{15} 
+ 27 q^{16} +  8 q^{17} - 48 q^{18} - 66 q^{19} - 72 q^{20}
\\ \hline
G^8_6 & 1 - 2q + q^2 + q^3 - q^4 + 2q^5 - 2q^6 - q^7 - 2q^8 + 2q^9 + 5q^{10}
\\ & 
- q^{11} - q^{12} - 3q^{13} - 2q^{14} + 5q^{16} - 2q^{18} - q^{19} - q^{20}
\\ \hline
G^8_7 & 1 - 2q + q^2 - 2q^4 + 3q^5 - 3q^8 + q^9 + 4q^{10}
\\ &
- q^{11} - 2q^{12} - 2q^{13} - 3q^{14} + 3q^{15} + 7q^{16} + 2q^{17} 
- 4q^{18} - 4q^{19} - 4q^{20}
\\ \hline
\end{array}
$$
}
\caption{The first 21 terms of $\Phi_G(q)$ for the irreducible planar
graphs with at most 8 edges}
\lbl{f.Phi.values}
\end{figure}

\begin{figure}[htpb]
$$
\psdraw{{phi.G6.2}}{3.0in} \qquad \psdraw{{phi.polygon.G6.2}}{3.0in} 
$$
\caption{Plot of the coefficients of $\Phi_{G^6_2}(q)$ on the left and 
$h_4(q)^2$ (keeping in mind that $G^6_2$ has two bounded square faces)
on the right.}
\lbl{f.phi.plot}
\end{figure}


\bibliographystyle{hamsalpha}
\bibliography{biblio}
\end{document}